\documentclass[oneside, 12pt]{amsart}
\usepackage{amscd, amssymb, amsmath, mathrsfs}
\usepackage[english]{babel}
\usepackage{booktabs}
\usepackage{tikz-cd}
\usepackage{url}
\usepackage[pdftex, colorlinks=true,  citecolor=blue, linkcolor=blue, linktocpage=true, backref=page]{hyperref}
\usepackage[all]{xy}


\setlength{\textwidth}{15cm}
\setlength{\textheight}{22.5cm}
\setlength{\oddsidemargin}{-0.0in}


\newcommand{\mydate}{
\number\day\space
\ifcase\month \or January\or February\or March\or April\or May\or June\or July\or August\or September\or October\or November\or December\fi 
\space\number\year}

\DeclareUrlCommand\arXiv{\urlstyle{same}}


\newtheorem{theorem}{Theorem}[section]
\newtheorem*{maintheorem}{Theorem}
\newtheorem{lemma}[theorem]{Lemma}
\newtheorem{proposition}[theorem]{Proposition}
\newtheorem{corollary}[theorem]{Corollary}

\theoremstyle{definition}

\newtheorem{remark}[theorem]{Remark}

\newtheorem*{acknowledgement}{Acknowledgement}

\theoremstyle{remark}



\newcommand{\PP}{\mathbb{P}}
\renewcommand{\AA}{\mathbb{A}}
\newcommand{\GG}{\mathbb{G}}

\newcommand{\ideala}{\mathfrak{a}}
\newcommand{\idealb}{\mathfrak{b}}

\newcommand{\shF}{\mathscr{F}}

\newcommand{\shI}{\mathscr{I}}

\newcommand{\catI}{\mathcal{I}}

\newcommand{\catN}{\mathcal{N}}
\newcommand{\catO}{\mathcal{O}}


\newcommand{\aff}{\text{\rm aff}}

\newcommand{\alg}{\text{\rm alg}}

\newcommand{\ant}{\text{\rm ant}}

\newcommand{\Ann}{\operatorname{Ann}}

\newcommand{\Aut}{\operatorname{Aut}}

\newcommand{\Bl}{\operatorname{Bl}}

\newcommand{\codim}{\operatorname{codim}}

\newcommand{\Der}{\operatorname{Der}}

\newcommand{\Diff}{\operatorname{Diff}}

\newcommand{\edim}{\operatorname{edim}}

\newcommand{\Fitt}{\operatorname{Fitt}}

\newcommand{\Gal}{\operatorname{Gal}}
\newcommand{\GL}{\operatorname{GL}}

\newcommand{\Hom}{\operatorname{Hom}}

\newcommand{\id}{{\operatorname{id}}}

\newcommand{\Kernel}{\operatorname{Ker}}

\newcommand{\Lie}{\operatorname{Lie}}

\newcommand{\lra}{\longrightarrow}

\newcommand{\Mat}{\operatorname{Mat}}

\newcommand{\maxid}{\mathfrak{m}}

\renewcommand{\O}{\mathscr{O}}

\newcommand{\pr}{\operatorname{pr}}
\newcommand{\Proj}{\operatorname{Proj}}

\newcommand{\quadand}{\quad\text{and}\quad}

\newcommand{\ra}{\rightarrow}

\newcommand{\reg}{\operatorname{reg}}

\newcommand{\sep}{{\operatorname{sep}}}

\newcommand{\Sch}{\text{\rm Sch}}

\newcommand{\Spec}{\operatorname{Spec}}
\newcommand{\Stab}{\operatorname{Stab}}

\newcommand{\Sym}{\operatorname{Sym}}

\newcommand{\uHom}{\underline{\operatorname{Hom}}}


\newcommand{\lieg}{\mathfrak{g}}


\begin{document}

\title[ ]
      {The inverse Galois problem for connected algebraic groups}.

\author[Michel Brion]{Michel Brion}
\address{Universit\'e Grenoble Alpes, Institut Fourier, CS 40700, 38058 Grenoble Cedex 9, France}
\curraddr{}
\email{michel.brion@univ-grenoble-alpes.fr}
 
\author[Stefan Schr\"oer]{Stefan Schr\"oer}
\address{Heinrich Heine University Düsseldorf, Faculty of Mathematics and Natural Sciences, Mathematical Institute, 40204 D\"usseldorf, Germany}
\curraddr{}
\email{schroeer@math.uni-duesseldorf.de}

\subjclass[2010]{14L15, 14G17, 14J50, 14M20, 17B50, 13N15}

\dedicatory{Revised version, 23 May 2024}

\begin{abstract}
We show that each connected group scheme of finite type over 
an arbitrary ground field is isomorphic to the component of the identity 
inside the automorphism group scheme of some projective, geometrically
integral scheme. The main ingredients are embeddings into
smooth group schemes, equivariant completions, blow-ups 
of orbit closures, Fitting ideals for K\"ahler differentials,
and Blanchard's Lemma.
\end{abstract}

\maketitle
\tableofcontents

\section*{Introduction}
\label{Introduction}

Let $k$ be a ground field of characteristic $p\geq 0$, and $X$ a proper scheme. 
According to a result of Matsumura and Oort  (\cite{Matsumura; Oort 1967}, 
Theorem 3.7),  the automorphism group scheme $\Aut_X$ exists and 
the connected component of the identity $\Aut^0_X$ is an \emph{algebraic group}, 
that is,  a group scheme of finite type.
Note that  the underlying scheme may be non-reduced for $p>0$.
Also note  that the group of connected components is not necessarily finitely generated 
(the first example was constructed by Lesieutre \cite{Lesieutre 2018}),
and understanding its structure attracted a lot of attention in the last decade.
 
Here we concentrate on the \emph{connected} algebraic group $\Aut_X^0$. 
Despite a very satisfactory structure theory for algebraic groups 
(see for example \cite{Milne 2017}),  surprisingly little is known about how the geometry 
of the scheme $X$ and the  geometry of the algebraic group $\Aut_X^0$ are related, 
in particular over imperfect ground fields and in presence of singularities.

The goal of this paper is to investigate the following problem, 
which can be seen as a higher-dimensional analog 
of the classical Inverse Galois Problem for number fields:
\emph{Given an algebraic group $G$, does there exist a proper, 
geometrically integral scheme $X$ whose automorphism group scheme 
is isomorphic to $G$?} 
 
Stated in this general form,  the answer to the problem is negative: 
Building on recent work of Lombardo and Maffei \cite{Lombardo; Maffei 2020}, 
Blanc and the first author \cite{Blanc; Brion 2021} showed that if $X$ is projective 
and $\Aut_X$ is an abelian variety, the automorphism group of this abelian variety 
must be finite (see \cite{Florence 2021} for further developments).
In particular, the selfproduct $E\times E$ of any elliptic curve  
does not occur as the automorphism group scheme of a projective, 
geometrically integral scheme.
  
The main result of this paper answers positively the above higher-dimensional
analog to the Inverse Galois Problem provided one restricts
attention to \emph{connected} algebraic groups:

\begin{maintheorem}
(See Thm.\ \ref{thm:main})
For every connected algebraic group $G$ over a ground field $k$, 
there is a projective,  geometrically integral scheme $X$ with $ \Aut_X^0= G$.
\end{maintheorem}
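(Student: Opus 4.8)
The inclusion $G\subseteq\Aut_X^0$ will be automatic for any faithful action of $G$ on a projective scheme $X$, so the entire difficulty lies in producing an $X$ for which no \emph{extra} connected automorphisms occur. The plan is to realise $X$ as the blow-up of a projective $G$-variety along a carefully chosen $G$-invariant centre, and to read off the automorphism group from that of the centre via Blanchard's Lemma. The guiding principle is the following reduction. Suppose $Z$ is projective and geometrically integral with a faithful $G$-action, and $C\subseteq Z$ is a $G$-invariant closed subscheme. Then the blow-down $\pi\colon X=\Bl_C Z\to Z$ is proper with $\pi_*\O_X=\O_Z$, so by Blanchard's Lemma the connected group $\Aut_X^0$ acts on $Z$ with $\pi$ equivariant. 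Since $\pi$ is birational and $X$ is integral, this yields a monomorphism $\Aut_X^0\hookrightarrow\Aut_Z^0$ whose image preserves the non-isomorphism locus $C$; conversely, by the universal property of blow-ups, the identity component of the $\Aut_Z^0$-stabiliser of $C$ lifts to $X$. Hence $\Aut_X^0=\operatorname{Stab}_{\Aut_Z^0}(C)^0$, and the theorem is reduced to engineering a pair $(Z,C)$ with $\operatorname{Stab}_{\Aut_Z^0}(C)^0=G$.

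\textbf{Construction of $Z$.} Since $G$ may be non-smooth — indeed non-reduced — in positive characteristic, whereas $X$ must be geometrically integral, I would first embed $G$ as a closed subgroup of a \emph{smooth} connected algebraic group $H$ (an embedding into a smooth group scheme, obtained by refining a faithful representation together with the structure theory of $G$). A smooth connected group is geometrically integral, and so is any normal projective equivariant completion $\bar H$ of $H$ for the left-translation action, which exists by the equivariant completion theorems. Restricting the $H$-action to $G\subseteq H$ then gives a faithful action of $G$ on the geometrically integral projective variety $Z=\bar H$, with $G\subseteq H\subseteq\Aut_Z^0$ and a dense orbit isomorphic to $H$. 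In particular geometric integrality is secured at the outset even when $G$ itself is non-reduced.

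\textbf{Choice of centre and rigidification.} The natural first candidate is $C=\bar G$, the closure in $Z$ of the left $G$-orbit of the identity: it is $G$-invariant, and among left translations exactly the elements of $G$ preserve it, so that $\operatorname{Stab}_H(\bar G)=G$. The point of the remaining two ingredients is to prevent the \emph{additional} connected automorphisms of the completion (those in $\Aut_Z^0$ beyond $H$) from also stabilising the centre. Blowing up a sequence of $G$-invariant orbit closures produces a variety whose canonical stratification — detected scheme-theoretically by the Fitting ideals $\Fitt_r(\Omega^1_{X/k})$ of the sheaf of Kähler differentials, which every automorphism preserves functorially — encodes the $G$-orbit structure. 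This both renders the relevant centres canonical and, crucially, records the correct infinitesimal (possibly non-reduced) data needed to match a non-smooth $G$ at the level of group schemes. Iterating the reduction above across these blow-ups cuts $\Aut^0$ down stage by stage, the target being to reach $\operatorname{Stab}(C)^0=G$ exactly.

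\textbf{Main obstacle.} The hard part is precisely this final control of the stabiliser: proving that after the chosen blow-ups the identity component of the \emph{full} automorphism group — not merely of the translation subgroup $H$ — collapses to $G$, with the correct scheme structure. This demands (i) bounding the a priori large $\Aut_Z^0$ of the completion and ruling out extra connected symmetries fixing the centre; (ii) arguing entirely scheme-theoretically, so that a non-reduced $G$ is recovered \emph{including its nilpotents}, which is exactly why Fitting ideals of $\Omega^1_{X/k}$ are used in place of reduced singular loci; and (iii) carrying this out over an arbitrary, possibly imperfect, ground field while preserving geometric integrality of $X$ under all completions and blow-ups. Checking that the Blanchard descent, the Fitting-ideal canonicity, and the stabiliser computation remain mutually compatible throughout is the technical core of the argument.
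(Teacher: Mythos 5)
Your overall architecture — embed $G$ into a smooth connected group $H$, take an equivariant completion, blow up a $G$-stable centre, descend via Blanchard's Lemma, and use Fitting ideals of $\Omega^1_{X/k}$ to recover the non-reduced structure — is exactly the paper's, so the strategy is sound. But there is a genuine gap at the heart of your ``choice of centre'': blowing up the single orbit closure $\bar G$ inside one completion $\bar H$ gives you no control over $\Aut^0_{\bar H}$, which is in general far larger than the group $H$ of left translations (e.g.\ a completion of $\mathbb{G}_a^n$ could be $\mathbb{P}^n$, with automorphism group $\PGL_{n+1}$). Your computation $\operatorname{Stab}_H(\bar G)=G$ only bounds the stabiliser inside the translation subgroup, whereas what you need is the stabiliser inside all of $\Aut^0_{\bar H}$. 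The paper's resolution is the essential missing idea: use the two-sided $H\times H$-action on the completion $V$, prove that $H$ is recovered as the centralizer of the right-translation copy $e\times H$ in $\Aut^0_V$ (Lemma \ref{lem:psi}), reduce that centralizer condition to centralizing a finite \'etale subscheme $F$ (Lemma \ref{lem:fixed}), and then convert ``centralizes $F$'' into ``stabilizes a closed subscheme'' by passing to $Y=V\times V$ with centre the union of graphs $\Gamma_f$, $f\in F$ (Lemmas \ref{lem:diag}, \ref{lem:psipsi}). Only after this does the stabiliser of the centre in the \emph{full} connected automorphism group equal $G$ (for smooth $G$; the non-smooth case then blows up a free $G$-orbit closure inside this $Y$).

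A second, smaller but still real gap is your asserted identity $\Aut_X^0=\operatorname{Stab}_{\Aut_Z^0}(C)^0$. The inclusion $\supseteq$ is indeed the universal property of blow-ups, but the inclusion $\subseteq$ is not formal: one must show that every connected automorphism of $X$ preserves the exceptional divisor \emph{and} that the induced action on $Z$ preserves $C$ scheme-theoretically, including at the level of Lie algebras. The paper does this on $k^{\alg}$-points via stability of irreducible components of the exceptional locus, and on Lie algebras via the rigidity computation $H^0(E_0,\catO_{E_0}(E_0))=0$ (which needs $\codim C\geq 2$, forcing a separate treatment of $\dim G=1$); in the non-smooth case it instead needs the explicit Rees-ring computation of $\Fitt_{n+s+l-2}(\Omega^1)$ to show the canonical Fitting subscheme of the blow-up has schematic image exactly the orbit closure. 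You correctly flag all of this as ``the main obstacle,'' but none of it is supplied, and without the $V\times V$ graph construction in particular the argument cannot be completed along the lines you propose.
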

 
One may choose the $G$-action to be generically free. If $G$ is smooth 
of dimension $n\geq 1$, one may additionally achieve $X$ normal with 
$\dim(X)=\max(2n,3)$. 
The result generalizes \cite{Brion 2014}, Theorem 1,
where the case that $G$ is smooth and $k$ is perfect was treated.
As mentioned there, if $p = 0$ then one may additionally take $X$
to be smooth. This follows from the existence of a canonical 
resolution of singularities (see  e.g.~\cite{Kollar 2007}), 
which is not known to exist in positive characteristic and 
dimension at least $3$. So it is an open question whether
the above scheme $X$ may be chosen to be smooth. The recent
preprint \cite{Florence 2023} answers this question in the 
affirmative when $G$ is a \emph{linear} algebraic group,
not necessarily connected, and $p$ is arbitrary: 
then one may even achieve that $G$ is the full automorphism 
group scheme $\Aut_X$.

As an application of the main theorem,
we see that for any finite-dimensional restricted Lie algebra $\lieg$ 
there is a projective, geometrically integral scheme $X$ 
with $H^0(X,\Theta_X)=\lieg$, where $\Theta_X$ denotes the tangent sheaf. 
Recall that the Lie algebra $\lieg$ of an algebraic group in characteristic $p>0$
carries, beside the bracket $[x,y]$, the $p$-map $x^{[p]}$ as an additional structure,  and is called a \emph{restricted Lie algebra}.
Note that there are severe restrictions on the restricted Lie algebra
$H^0(X,\Theta_X)$ if the proper integral scheme $X$ is a surface of general type,
or more generally has \emph{foliation rank}  at most $1$, according to recent work
of Tziolas and the second author \cite{Schroeer; Tziolas 2023}.

We expect further applications of our result, for example in connection to classifying stacks $BG$, 
perhaps analogous to Totaro's approach \cite{Totaro 1999}, 
or with respect to non-abelian cohomology $H^1(k,G)$, as in \cite{Hilario; Schroeer 2023}, Section 9.

The idea for the proof of the above theorem  is to start with an inclusion $G\subset\Aut_Y$ 
for some  projective, geometrically integral scheme $Y$, chosen in such a way 
that $G$ is the stabilizer of some closed subscheme $Z\subset Y$, and     
pass to the blowing-up $X=\Bl_Z(Y)$. Combining naturality of blowing-ups 
with Blanchard's Lemma, we get inclusions $G\subset\Aut^0_X$ inside $\Aut^0_Y$,
and manage to infer equality by choosing the schemes $Y$ and $Z$  appropriately.
 
We actually proceed in two steps, treating the case that $G$ is smooth first. 
In this situation we regard $G$ as a homogeneous space for $G\times G$ 
via left and right multiplication, and define $Y=V\times V$  as the self-product
of some  $G \times G$-equivariant completion 
$G\subset V$. The center $Z\subset Y$ for the blowing-up 
stems from  certain graphs of morphisms $V\ra V$. 
For technical reasons, the case $\dim(G)=1$ has to be treated separately.
Along the way, we establish several facts on group scheme actions that appear to be of independent interest.

In a second step, we consider algebraic groups $G$ that are non-smooth, 
which happens only in characteristic $p>0$. 
To start with, we establish that $G$ embeds into some smooth connected 
algebraic group. Such an embedding is in no way canonical, and we loose 
control of dimensions. But together with the preceding paragraph we get  
$G\subset\Aut^0_Y$ for some projective, geometrically integral $Y$.   
Now the center $Z\subset Y$ is chosen as the  closure of some free $G$-orbit.
The local rings on such a free orbit are complete intersections, and acquire a rather simple description after base-changing to the algebraic closure of $k$.
We use this to get information on the \emph{Fitting ideals} of 
$\Omega^1_X$ on the blowing-up $X=\Bl_Z(Y)$, and the corresponding closed subschemes.
The latter are intrinsically attached to $X$, thus are stabilized by all group scheme actions,
and from  this we  infer $G=\Aut^0_X$. 
To understand these Fitting ideals, we have to make  explicit computations for  the blowing-up of the polynomial ring
$R=k[x_1,\ldots,x_n]$ with respect to ideals of the form $\ideala=(x_s^{\nu_s},\ldots,x_n^{\nu_n})$,
where the exponents are $p$-powers.  
 
Let us also discuss the famous Inverse Galois Problem, which can  be stated as follows:
Given a  number field $F$ and a finite group $G$, does there exist 
a finite Galois extension $F\subset E$ with automorphism group 
$\Aut(E/F)$ isomorphic to $G$? Hilbert showed this 
for the symmetric groups (\cite{Hilbert 1892}, page 124), and
Shafarevich established it for solvable groups 
(\cite{Safarevic 1954}, see also \cite{Neukirch; Schmidt; Wingberg 2008}). 
Despite many other positive results in this direction,  the general case remains open to date.
For overviews, see \cite{Serre 1992} or \cite{Voelklein 1996} or  \cite{Malle; Matzat 1999}. 
But note that  Fried and Koll\'ar \cite{Fried;  Kollar 1978} showed that the answer to the Inverse 
Galois Problem becomes affirmative if one drops the assumption that 
$F\subset E$ is Galois. This was extended to function fields in one variable 
in the recent preprint \cite{Bragg 2023}:  Given a finite \'etale group scheme $G$
and a (projective,  geometrically integral) regular curve $Y$ over $k$, 
there exists a regular curve $X$,  finite over $Y$,  such that $G = \Aut_{X/Y} = \Aut_X$.
In particular,  every finite \'etale group scheme is the full automorphism group scheme
of some regular curve.

\medskip
The paper is structured as follows:
In Section \ref{sec:prel} we fix notation and establish several useful general facts on group schemes.
Section \ref{sec:smooth} contains the proof of the main result in the case that $G$ is smooth.
In Section \ref{sec:fitting} we discuss Fitting ideals for K\"ahler differentials, and the stability of the resulting
closed subschemes with respect to group scheme actions.
Section \ref{sec:rees} contains some explicit computations of such Fitting ideals for certain Rees rings $R[\ideala T]$,
needed to understand blowing-ups of free $G$-orbits.
This is applied in Section \ref{sec:ns}, where we prove our main result for non-smooth $G$.

\begin{acknowledgement}
This research started when the first author visited the Heinrich Heine University D\"usseldorf,
and was continued during two visits of the second author at the University of Grenoble.
We both thank the host institutions for hospitality.
This research was also conducted in the framework of the   research training group
\emph{GRK 2240: Algebro-geometric Methods in Algebra, Arithmetic and Topology}, which is funded
by the Deutsche Forschungsgemeinschaft. 
Finally,  we thank the two referees for their careful reading
and helpful comments.
\end{acknowledgement}

\section{Preliminaries}
\label{sec:prel}

In this section we introduce notations and conventions, and establish
some general facts on group schemes. Throughout the paper, 
we work over a ground field $k$ of characteristic $p\geq 0$.
Choose an algebraic closure $k^{\alg}$,
and denote by $k^{\sep}$ the separable closure
of $k$ in $k^{\alg}$.
For any field extension $k\subset K$ and any  scheme $X$, we write 
$X_K=X \times_{\Spec(k)} \Spec(K)$ for the base-change.

Given a group scheme $G$, we denote the
neutral element by $e  \in G(k)$,
and the Lie algebra by $\Lie(G)$.
We will freely use the fact that every
subgroup scheme  $H\subset G$ is closed
(see \cite{SGA 3a}, Expos\'e $\text{VI}_\text{A}$,  Corollaire 0.5.2)
An \emph{algebraic group} is a group scheme
of finite type. A \emph{locally algebraic group} 
is a group scheme, locally of finite type. 
For any locally algebraic group $G$, the connected 
component of $e\in G$ is a connected algebraic subgroup, 
denoted by $G^0$ and called the \emph{neutral component} 
(see \cite {Demazure; Gabriel 1970}, Theorem II.5.1.1).
Note that $G$ may fail to be smooth. In this case we have  $p>0$,
and the local ring $\O_{G,e}$ is not regular.

For any proper scheme $X$, the automorphism group
functor $T \mapsto \Aut(X \times T/T)$ on the category $(\Sch/k)$ is represented
by a locally algebraic group $\Aut_X$, according to 
\cite{Matsumura; Oort 1967}, Theorem 3.7. Its Lie algebra is  
$\Der(\catO_X)=H^0(X,\Theta_X)$, the space of $k$-linear 
derivations $D :\O_X\ra \O_X$,
or equivalently the space of global sections of the tangent sheaf $\Theta_X=\uHom(\Omega^1_X,\O_X)$,
see loc.~cit., Lemma 3.4. 
Clearly, the neutral component $\Aut^0_X$ has the same Lie algebra.

Let $G$ be a group scheme. A $G$-\emph{scheme} 
is a scheme $X$ equipped with a $G$-action 
$a : G \times X \to X$.
As customary, we write $ g\cdot x= a(g,x) $  for  
$g \in G(T)$, $x \in X(T)$.

We will repeatedly use a result for $G$-actions called
\emph{Blanchard's Lemma}: Let $f: X \to Y$ be a proper morphism
of schemes of finite type and let $G$ be a connected algebraic
group acting on $X$. If the canonical map $\catO_Y \to f_*(\catO_X)$ 
is bijective, then there is a unique $G$-action on $Y$ such that
$f$ is equivariant (see \cite{Blanchard 1956}, Proposition 1.1 
for the original statement in the setting of complex geometry, 
and \cite{Brion 2017}, Theorem 7.2.1 for the above scheme-theoretic 
version). As a consequence, if in addition $X$ is proper 
then $f$ induces a homomorphism of group schemes 
$f_* : \Aut^0_X \to \Aut^0_Y$.

Given a group scheme $G$, a $G$-scheme $X$ and a closed subscheme 
$Z\subset X$, the subgroup functor comprising the $g\in G(T)$ 
such that $g|{T'} \cdot z\in Z(T')$
for all $T'\ra T$ and $z \in Z(T')$ is representable. 
The resulting subgroup scheme $\Stab_G(Z)\subset G$ is called 
the \emph{stabilizer}; it acts naturally on $Z$.
Likewise, the subgroup functor given by the condition $g|T'\cdot z=z$ 
is representable by a normal subgroup scheme of 
$\Stab_G(Z)$,  the \emph{kernel of its action on} $Z$
(see \cite{Demazure; Gabriel 1970}, Theorem II.1.3.6 for these facts).  In particular,  the kernel of the $G$-action on $X$ is
representable by a normal subgroup scheme.

If $X$ is proper and $Z$ is a closed subscheme, 
we write $\Aut_{X,Z}$ for the stabilizer of $Z$ in $G=\Aut_X$. 
Its Lie algebra is  the space of $k$-linear derivations $D:\O_X\ra \O_X$ 
that preserve the sheaf of ideals $\shI\subset\O_X$ corresponding to $Z$, see 
for example \cite{Martin 2022}, Section 2. The blow-up 
$\Bl_Z(X) = \Proj(\bigoplus_{i =0}^{\infty} \shI^i)$ is equipped with an action 
of $\Aut_{X,Z}$ such that the canonical morphism $f : \Bl_Z(X) \to X$ 
is equivariant (see loc. cit., Proposition 2.7).

A closed subscheme $Z\subset X$ is called $G$-\emph{stable}
if $\Stab_G(Z) = G$. Equivalently,
the morphism $a : G \times Z \to X$  factors through $Z$.
The latter notion of $G$-stability extends to 
arbitrary subschemes $Z\subset X$, which are intersections of closed subschemes with open sets.
Note that an  open set $U\subset X$ is
$G$-stable if and only if for all field
extension $k\subset K$ and  all $g \in G(K)$ and $x \in U(K)$,
we have $g \cdot x \in U(K)$.
 
We now collect auxiliary results on $G$-stability.
First recall that the \emph{schematic image} for a morphism   $f:X\ra Y$ of schemes
is the smallest closed subscheme $Z\subset Y$ over which the morphism factors  
(\cite{EGA I}, Section 6.10).
It   exists without further assumptions, 
and  corresponds to the largest quasicoherent sheaf of ideals $\shI\subset\O_Y$ with $f^{-1}(\shI)\O_X=0$.
If $f$ is quasicompact, the underlying topological space of $Z$
is the closure of the set-theoretic image, and $\shI$ is the kernel of $\O_Y\ra f_*(\O_X)$
(see \cite{Goertz; Wedhorn 2010}, Section 10.8 for these facts).
Note that if $f$ is also quasiseparated,  the direct image $f_*(\O_X)$ is already quasicoherent.
If $f$ is proper, the schematic image coincides with the set-theoretical image,  endowed with some scheme structure.
The following observation is a version of \cite{Martin 2022}, Lemma 2.5.

\begin{lemma}
\label{lem:image}
Let $G$ be a group scheme, and $f : X \to Y$ be an equivariant quasicompact morphism of $G$-schemes.
Then the schematic image $Z\subset Y$ is $G$-stable. 
\end{lemma}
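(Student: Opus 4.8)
The plan is to exploit the universal property of the schematic image together with the characterization that a closed subscheme $Z \subset Y$ is $G$-stable precisely when the action morphism $a : G \times Z \to Y$ factors through $Z$. The key idea is that the schematic image $Z$ of $f : X \to Y$ corresponds to the largest quasicoherent sheaf of ideals $\shI \subset \O_Y$ with $f^{-1}(\shI)\O_X = 0$, and I want to show this ideal is preserved by the $G$-action.

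First I would set up the equivariant square. Since $f$ is $G$-equivariant, we have a commutative diagram relating the action on $X$ and on $Y$: the morphism $\id_G \times f : G \times X \to G \times Y$ composed with the action $a_Y : G \times Y \to Y$ equals $f \circ a_X : G \times X \to Y$, where $a_X$ is the action on $X$. The target closed subscheme we care about is $Z \subset Y$; to check $G$-stability I must verify that $a_Y : G \times Z \to Y$ factors through $Z$, equivalently that $a_Y^{-1}(\shI)$ vanishes on $G \times Z$. Rather than argue on $G \times Z$ directly, I would reduce everything to a statement about schematic images of base-changed morphisms.

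The cleanest route is to observe that $G \times Z \subset G \times Y$ is itself the schematic image of the projected morphism $\id_G \times f : G \times X \to G \times Y$. This uses the fact that the schematic image commutes with flat base change (here base change along the flat projection $G \times Y \to Y$, since $G$ is a group scheme and the structure morphism $G \to \Spec(k)$ is flat — or at least one argues via the explicit ideal-sheaf description, as the formation of $\shI$ as the kernel of $\O_Y \to f_*\O_X$ is compatible with flat pullback for quasicompact $f$). Given this, $G \times Z$ is the smallest closed subscheme of $G \times Y$ through which $\id_G \times f$ factors. Now consider the automorphism $\Phi = (a_Y, \pr_2) : G \times Y \to G \times Y$, i.e. the ``twist'' sending $(g, y) \mapsto (g, g \cdot y)$, which is an isomorphism with inverse $(g,y) \mapsto (g, g^{-1}\cdot y)$. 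Because $f$ is equivariant, $\Phi \circ (\id_G \times f)$ equals $(\id_G \times f)$ followed by the same twist on the $X$-side, and one checks that $\Phi$ carries the schematic image of $\id_G \times f$ to itself. Unwinding this identity on the level of the action $a_Y$ shows exactly that $a_Y$ maps $G \times Z$ into $Z$, which is the desired factorization.

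The main obstacle I anticipate is the base-change compatibility of the schematic image: one must be careful that forming the schematic image of a quasicompact morphism commutes with the flat base change along $\pr : G \times Y \to Y$, so that the preimage $G \times Z$ really is the schematic image of $\id_G \times f$ rather than merely containing it. The excerpt records that for quasicompact $f$ the ideal $\shI$ is the kernel of $\O_Y \to f_*(\O_X)$, and flatness of $G$ over $k$ ensures this kernel pulls back correctly (pullback along a flat morphism is exact and commutes with pushforward by quasicompact $f$ via the projection formula / flat base change for $f_*$). Once that compatibility is secured, the equivariance of $f$ forces the twist automorphism to respect $Z$, and the factorization of $a_Y$ through $Z$ follows formally. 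The remaining bookkeeping — translating ``the twist preserves the schematic image'' into ``$a_Y(G \times Z) \subseteq Z$'' — is routine diagram-chasing.
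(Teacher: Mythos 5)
Your proposal is correct, and its central technical step coincides with the paper's: both arguments identify $G \times Z$ as the schematic image of $\id \times f : G \times X \to G \times Y$ via compatibility of schematic images with flat base change along the projection $\pr_Y : G \times Y \to Y$ (flat because $G \to \Spec(k)$ is). Where you diverge is the concluding step. The paper applies transitivity of schematic images (EGA I, 6.10.3) to the identity $f \circ a_X = a_Y \circ (\id \times f)$, using that the action $a_X : G \times X \to X$ is schematically dominant (it retracts the section $x \mapsto (e,x)$), to conclude that $a_Y$ restricted to $G \times Z$ has schematic image $Z$. You instead introduce the shearing automorphism $\Phi = (\pr_1, a_Y)$ of $G \times Y$, observe that equivariance gives $\Phi \circ (\id \times f) = (\id \times f) \circ \Phi_X$ with $\Phi_X$ the analogous automorphism of $G \times X$, deduce $\Phi(G \times Z) = G \times Z$ since pre- and post-composition with isomorphisms transform schematic images in the obvious way, and then note that $a_Y|_{G \times Z} = \pr_2 \circ \Phi|_{G \times Z}$ visibly factors through $Z$. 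Your route trades the citation of transitivity plus schematic dominance of the action map for the (equally standard) shearing isomorphism; both are complete proofs. One small wording slip: ``$\Phi \circ (\id \times f)$ equals $(\id \times f)$ followed by the same twist on the $X$-side'' should read ``preceded by'', i.e.\ $(\id \times f) \circ \Phi_X$, since the twist on the $X$-side is defined on $G \times X$.
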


\begin{proof}
By assumption, we have a commutative square
\[ \xymatrix{
G \times X 
\ar[r]^-{\id \times f}\ar[d]_{a} 
& G \times Y \ar[d]^{b} \\ 
X \ar[r]^-{f} & Y, \\}
\]
where $a$ and $b$ correspond to the $G$-actions.
Also, the schematic image of $a$ is the whole $X$,
since $e\cdot x = x$.
In view of the transitivity of schematic images 
(see \cite{EGA I}, Proposition 6.10.3), it suffices 
to show that the schematic image of $\id \times f$
is $G \times Z$. 

Since $f$ is quasicompact, the formation of 
its schematic image commutes with flat base change 
(see \cite{EGA IV.3}, Th\'eor\`eme 11.10.5). 
This yields the desired assertion by using 
the cartesian square
\[ \xymatrix{
G \times X 
\ar[r]^-{\id \times f}\ar[d]_{\pr_X} 
& G \times Y \ar[d]^{\pr_Y} \\ 
X \ar[r]^-{f} & Y, \\}
\]
where the projections $\pr_X$ and $\pr_Y$ are flat. 
\end{proof}

\begin{lemma}\label{lem:smooth}
Let $G$ be a group scheme, and $X$ a $G$-scheme of finite type. 
Then the smooth locus $U\subset X$ is $G$-stable.
\end{lemma}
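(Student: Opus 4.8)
The plan is to verify the pointwise criterion for $G$-stability of open sets recalled just above the statement: for every field extension $k \subset K$, every $g \in G(K)$ and every $x \in U(K)$, one must check that $g \cdot x$ again lies in $U(K)$. The two inputs are that the formation of the smooth locus commutes with base change to $K$, and that each rational point $g \in G(K)$ acts on $X_K$ as an automorphism over $K$.

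First I would reduce to a statement about the base-changed scheme $X_K = X \times_{\Spec(k)} \Spec(K)$. Since smoothness of a finite-type scheme over a field is stable under field extension and descends along the faithfully flat map $\Spec(K) \to \Spec(k)$, the smooth locus of $X_K$ over $K$ is precisely $U_K = U \times_{\Spec(k)} \Spec(K)$, the preimage of $U$ under the projection $X_K \to X$. In particular, a point $x \in X(K)$ lies in $U(K)$ if and only if the associated $K$-point of $X_K$ factors through $U_K$.

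Next, given $g \in G(K)$, I would consider the translation $\tau_g : X_K \to X_K$, $x \mapsto g \cdot x$, obtained from the base-changed action $a_K : G_K \times_K X_K \to X_K$ by restricting along the section of $G_K$ determined by $g$. The group action axioms give $\tau_{g^{-1}} \circ \tau_g = \id$, so $\tau_g$ is an automorphism of $X_K$ over $K$. Any such automorphism necessarily carries the smooth locus onto itself, whence $\tau_g(U_K) = U_K$. Combining this with the previous step, for $x \in U(K)$ the image $g \cdot x = \tau_g(x)$ lies in $U_K(K) = U(K)$, which is exactly what the criterion demands.

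The only point requiring care --- and the step I would single out as the crux --- is the compatibility of the smooth locus with the base change $k \subset K$, namely that $U_K$ really is the smooth locus of $X_K$ over $K$ and not merely contained in it. This rests on the fact that smoothness over a field can be tested after an arbitrary field extension; everything else is a formal consequence of the group action axioms and the definition of $G$-stability for open subschemes.
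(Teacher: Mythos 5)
Your proof is correct and follows essentially the same route as the paper: reduce to the pointwise criterion for $G$-stability of open sets, use that the smooth locus commutes with base change along field extensions, and observe that the translation by $g$ is an automorphism preserving the smooth locus. The only cosmetic difference is that the paper further reduces to an algebraically closed base field and phrases the last step via regularity of local rings, whereas you argue directly over $K$; both are fine.
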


\begin{proof}
Since $U\subset X$ is open, it suffices 
to check that for all field extensions $k\subset K$ and 
all $g \in G(K)$, $x \in U(K)$, we have $g \cdot x \in U(K)$.
As the formation of the smooth locus commutes 
with base change by field extensions (see 
\cite{EGA IV.4}, Proposition 17.3.3, 
Proposition 17.7.1), it suffices to treat the case that $K=k$, 
and that this field is algebraically closed. 
Then $U$ comprises the points where the local rings 
are regular, and the statement is immediate.
\end{proof}

\begin{lemma}\label{lem:irr}
Let $G$ be a connected algebraic group
and $X$ a $G$-scheme of finite type. 
Then every connected component of $X$
is $G$-stable. 
If in addition $X$ is geometrically reduced, 
then every irreducible component of $X$ is 
$G$-stable.
\end{lemma}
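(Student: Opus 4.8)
The plan is to handle both assertions by a single geometric template: for a component $C$ of $X$ (a connected component in the first case, an irreducible component in the second) I restrict the action $a : G\times X\to X$ to $G\times C$, show that the closure of its image is again a component, and then use the maximality of components to force the image to be $C$ itself. Since $X$ is of finite type over $k$ it is Noetherian, so it has only finitely many connected and irreducible components, each closed (and the connected ones are also open). The feature of $G$ that drives the template is that $G$, being connected with the rational point $e\in G(k)$, is geometrically connected, and in fact \emph{geometrically irreducible}, the neutral component of a locally algebraic group being geometrically irreducible (see \cite{SGA 3a}, Expos\'e $\text{VI}_\text{A}$).

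For the first assertion, let $X_0\subset X$ be a connected component, so $X_0$ is open and closed. Because $G$ is geometrically connected and $X_0$ is connected, the product $G\times X_0$ is connected; hence $a(G\times X_0)$ is a connected subset of $X$. It contains $e\cdot X_0=X_0$, and as $X_0$ is a maximal connected subset we conclude $a(G\times X_0)=X_0$. Thus $a$ carries $G\times X_0$ into $X_0$, which is exactly $G$-stability by the field-valued-point criterion recorded above for the open set $X_0$.

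For the second assertion I first run the same argument topologically: with $X_i$ an irreducible component, geometric irreducibility of $G$ makes $G\times X_i$ irreducible, so the closure of $a(G\times X_i)$ is an irreducible closed set containing $X_i$ and therefore equals $X_i$; set-theoretically the image is $X_i$. What remains, and what I expect to be the main obstacle, is to promote this to the scheme-theoretic statement that $a : G\times X_i\to X$ factors through the reduced closed subscheme $X_i$, equivalently that $a^{*}\shI_{X_i}$ vanishes on $G\times X_i$, where $\shI_{X_i}\subset\O_X$ is the ideal of $X_i$. This is \emph{not} formal: in characteristic $p$ the group $G$ may be non-reduced, so $G\times X_i$ is non-reduced and the set-theoretic containment does not by itself kill $a^{*}\shI_{X_i}$. (Lemma \ref{lem:image} applied to the equivariant map $a|_{G\times X_i}$ shows its schematic image is $G$-stable with support $X_i$, but that image could a priori carry a non-reduced structure.)

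To overcome this I would invoke the hypothesis that $X$ is geometrically reduced, which is used precisely here. Since the factoring condition $a^{*}\shI_{X_i}=0$ can be checked after the faithfully flat base change $\Spec(k^{\alg})\to\Spec(k)$, and since geometric reducedness guarantees that $X_{k^{\alg}}$ is reduced with $(X_i)_{k^{\alg}}$ a union of irreducible components, it suffices to prove that each irreducible component of $X_{k^{\alg}}$ is $G_{k^{\alg}}$-stable and then take unions and descend. Over the perfect field $k^{\alg}$ the smooth subgroup $G_{\red}$ acts with $G_{\red}\times X_i$ reduced, so the topological conclusion above upgrades at once to scheme-theoretic $G_{\red}$-stability; the remaining infinitesimal part of $G$ is controlled by showing that $\shI_{X_i}$ is a differential ideal, i.e.\ that every derivation $D\in\Der(\O_X)=H^0(X,\Theta_X)$ preserves the ideal of each irreducible component. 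This last point is a Seidenberg-type statement, that derivations map the minimal primes of a geometrically reduced Noetherian ring into themselves, and, combined with an induction along the Frobenius kernels of $G$, it promotes stability under $G_{\red}$ to stability under all of $G$, completing the argument.
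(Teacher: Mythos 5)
Your proof of the first assertion is correct and essentially the paper's argument: geometric connectedness of $G$ makes $G\times X_0$ connected, and maximality of the connected component forces $a(G\times X_0)=X_0$; since $X_0$ is open, the set-theoretic containment already gives the scheme-theoretic factorization. You also diagnose the real difficulty in the second assertion accurately: the issue is not the topology but the scheme-theoretic factorization of $a|_{G\times X_i}$ through the reduced subscheme $X_i$ when $G$ is non-reduced, and your parenthetical about Lemma \ref{lem:image} possibly producing a non-reduced image is a legitimate concern.

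The gap is in your final step. The Seidenberg-type fact (derivations of a reduced Noetherian ring preserve its minimal primes) controls only $\Lie(G)$, hence only the \emph{first} Frobenius kernel $G_1$. Stability under $G_n$ for $n\geq 2$ is not a derivation condition: a $T$-point of $G_n$ with $T$ artinian acts through a truncated Hasse--Schmidt derivation $(\id,D_1,D_2,\dots)$, and the statement that the higher components $D_j$ preserve the minimal primes of a geometrically reduced algebra is precisely the hard point; it does not follow from the $j=1$ case and you do not prove it. The phrase ``induction along the Frobenius kernels'' does not resolve this: the natural inductive device (pass to $X/G_1$ and act by $G/G_1$) breaks down because recovering $X_i$ from a component of $X/G_1$ requires taking a reduction, and reductions of $G$-stable subschemes need not be $G$-stable for non-reduced $G$ --- which is the original problem. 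Separately, even granting stability under $G_{\red}$ and all $G_n$, you would still need to argue that these generate $G$ as a group scheme (equivalently that the closed subgroup scheme $N_G(X_i)$ containing them is all of $G$, e.g.\ via Krull's intersection theorem at $e$ and translation); $G_{\red}$ and $G_1$ alone do not suffice, as $\alpha_{p^2}$ shows. The paper sidesteps all of this with the smooth locus: $U\subset X$ is $G$-stable by Lemma \ref{lem:smooth} because it is intrinsic, its connected components are the $U\cap X_i$ (here geometric reducedness enters, to make $U$ dense and to separate the components), these are $G$-stable by the first assertion applied to $U$, and each $X_i$ is the schematic image of the \emph{reduced, schematically dense} subscheme $U\cap X_i$, so Lemma \ref{lem:image} yields $X_i$ on the nose with no non-reducedness to worry about. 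I would recommend replacing your step (c)--(d) by that argument, or else supplying a genuine proof that minimal primes of a geometrically reduced finite-type algebra are preserved by arbitrary infinitesimal automorphisms.
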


\begin{proof}
Let $Y\subset X$ be a connected component.
Then the set-theoretical image  $G \cdot Y = a(G \times Y)$
is open in $X$, since the   morphism $a:G\times X\ra X$ describing the action 
is flat,  and $G \times Y$ is
of finite type. The latter
is connected, since $G$ is geometrically
connected and $Y$ is connected. 
Thus, $G \cdot Y$ is connected as well.
As $Y$ is contained in $G \cdot Y$, we have 
$G \cdot Y = Y$ and hence $Y$ is $G$-stable.

Now assume that $X$ is geometrically reduced. Let   $X_1,\ldots,X_n$ be
the irreducible components, endowed with reduced scheme structures, and $U\subset X$ the smooth locus.
Then  $U_i=U\cap X_i$ are the connected components of $U$,
and $U_i\subset X_i$ is schematically dense.
In view of Lemma \ref{lem:smooth} and the preceding paragraph, it follows that every $U_i$ is 
$G$-stable. Thus, each $X_i$ is $G$-stable 
by Lemma \ref{lem:image}.
\end{proof}

Given a group scheme $G$ and a $G$-scheme $X$,
we denote by $X^G\subset X$ the \emph{scheme of fixed points}.
This is the closed subscheme  representing 
the subfunctor comprising the $x\in X(T)$
such that for all $T'\ra T$ and $g\in G(T')$ we have
$g\cdot x=x$, see for example \cite{Demazure; Gabriel 1970},
Theorem II.1.3.6. The arguments in loc.\ cit.\ reveal that 
for any closed subscheme $F\subset G$, 
the \emph{scheme of $F$-fixed points} $X^F\subset X$ 
is indeed representable by a closed subscheme.

\begin{lemma}
\label{lem:fixed}
Let $G$ be a locally algebraic group, and $X$ 
a $G$-scheme of finite type. Then there exists 
a finite closed subscheme $F\subset G$ such that 
$X^G = X^F$. If $G$ is smooth,  then $F$ may 
be taken \'etale.
\end{lemma}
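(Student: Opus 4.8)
The plan is to realize $X^G$ as an intersection of the fixed-point schemes $X^F$ attached to finite closed subschemes $F\subseteq G$, and then to collapse this (a priori infinite) intersection to a single $F$ by Noetherianity. Since $X$ is of finite type over $k$ it is quasi-compact and $\O_X$ is Noetherian, so it suffices to prove the scheme-theoretic equality $X^G=\bigcap_{F}X^F$, the intersection running over all finite closed subschemes $F\subseteq G$. Indeed, the ideal sheaf of the right-hand side is $\sum_F\shI_{X^F}$, which is locally finitely generated, hence already equals $\shI_{X^{F_1}}+\dots+\shI_{X^{F_n}}$ for finitely many $F_i$; taking $F$ to be the scheme-theoretic union $F_1\cup\dots\cup F_n$ (a finite closed subscheme) and using that $X^{F'}\subseteq X^F$ whenever $F\subseteq F'$ then forces $X^F=X^G$. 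The inclusion $X^G\subseteq\bigcap_F X^F$ is obvious, so the whole content is the reverse inclusion for $Z:=\bigcap_F X^F$.

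The key geometric input is that the finite closed subschemes of $G$ are \emph{scheme-theoretically dense}. This is local on $G$, so let $\Spec B\subseteq G$ with $B$ a finitely generated $k$-algebra; I must check that $\bigcap_I I=0$, where $I$ ranges over the ideals of $B$ with $\dim_k(B/I)<\infty$. For every maximal ideal $\mathfrak{m}\subseteq B$ the residue field is finite over $k$ by the Nullstellensatz, so each power $\mathfrak{m}^n$ has finite colength; hence any element of $\bigcap_I I$ lies in $\bigcap_n\mathfrak{m}^n$ for every $\mathfrak{m}$. By Krull's intersection theorem $\bigcap_n\mathfrak{m}^nB_{\mathfrak{m}}=0$, and since $B$ injects into $\prod_{\mathfrak{m}}B_{\mathfrak{m}}$ the element vanishes. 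Thus $\bigcap_I I=0$, as claimed.

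To conclude $Z\subseteq X^G$ I transfer this density across the product with $Z$. As $Z$ is flat over the field $k$, the functor $-\otimes_k\O_Z$ is exact, so $\bigcap_F\shI_{F\times Z}=0$ in $\O_{G\times Z}$; that is, the family $\{F\times Z\}$ is scheme-theoretically dense in $G\times Z$. On the other hand, for each finite $F$ the inclusion $Z\subseteq X^F$ means precisely that $F$ acts trivially on $Z$, so the action $a$ and the projection $\pr$ agree on $F\times Z$. Since $Z\subseteq X^G$ may be tested on an affine cover of $X$, I may assume the two morphisms in question have affine, hence separated, target, so that their equalizer is a \emph{closed} subscheme of $G\times Z$; this equalizer contains the scheme-theoretically dense family $\{F\times Z\}$ and is therefore all of $G\times Z$, i.e.\ $a=\pr$ on $G\times Z$. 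Hence $Z$ is $G$-fixed, $Z\subseteq X^G$, and with the first paragraph this produces the desired finite $F$. This density-and-descent step is the main obstacle: the delicate point is not the density of finite subschemes itself, but passing it through the product with $Z$ and from $F$ to all of $G$ to force triviality of the action.

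For the refinement when $G$ is smooth, I would rerun the argument using only finite \emph{étale} subschemes $F\subseteq G$. The one new ingredient is the corresponding density: on an affine open $\Spec B\subseteq G$ with $B$ smooth (in particular reduced), the closed points whose residue field is separable over $k$ are dense, so the intersection of the radical ideals cutting out the finite reduced subschemes supported at such points is already $0$; this is the delicate ingredient in the smooth case. The previous reasoning then gives $X^G=\bigcap_{F\ \text{étale}}X^F$, and finitely many étale $F_i$ suffice. Finally, the scheme-theoretic union of finitely many finite étale subschemes is again finite and étale—an intersection of radical ideals is radical, and the residue fields occurring are unchanged, hence still separable—so the resulting $F$ may be taken étale.
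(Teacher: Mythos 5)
Your overall strategy coincides with the paper's: both proofs combine Noetherian descent (you via finite generation of $\sum_F\shI_{X^F}$, the paper via choosing $F_0$ with $X^{F_0}$ minimal) with the schematic density of the finite, resp.\ finite \'etale, closed subschemes in a group scheme that is locally of finite type, resp.\ smooth. Your density argument via the Nullstellensatz and Krull's intersection theorem, the transfer of density across $-\times Z$ (valid over a field by a basis argument), and your treatment of unions of \'etale subschemes are all sound.

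The one step that does not hold up as written is the passage from ``$a=\pr$ on every $F\times Z$'' to ``$a=\pr$ on $G\times Z$''. You need the equalizer of $a,\pr\colon G\times Z\to X$ to be a \emph{closed} subscheme in order to play it off against schematic density, and that requires $X$ to be separated, whereas the lemma only assumes $X$ of finite type. Your proposed reduction --- ``I may assume the two morphisms have affine, hence separated, target'' --- does not work: a point $w\in G\times Z$ need not admit a neighbourhood on which $a$ and $\pr$ both land in a common affine open of $X$ (this is exactly what can fail when $X$ is non-separated and $a(w)\neq\pr(w)$), so the opens $a^{-1}(U)\cap\pr^{-1}(U)$ need not cover $G\times Z$, and equality of the two maps cannot be tested affine-locally on the target. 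The gap is reparable: in general the equalizer is an immersion, i.e.\ closed in some open $V\subseteq G\times Z$; since it contains $\{g\}_{\red}\times Z$ for every closed point $g\in G$, the open $V$ contains all closed points of the Jacobson scheme $G\times Z$ and hence equals $G\times Z$, so the equalizer is closed after all. Alternatively one can do what the paper does and quote that the centralizer $C_G(Z)$ is representable by a (closed) subgroup scheme of $G$, which must be all of $G$ because it contains a schematically dense family of finite subschemes.
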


\begin{proof}
Consider the family of closed subschemes
$X^F$ of $X$, where $F$ runs over the finite closed
subschemes of $G$. Since $X$ is noetherian, we may choose such a
subscheme $F_0$ with $X^{F_0}$ minimal.
For each $F$ containing $F_0$ we have 
$X^F = X^{F_0}$ by minimality.
Equivalently, $F$ is a subscheme of the 
largest subgroup scheme $H \subset G$
that acts trivially on $X^{F_0}$. 
Moreover, the family of finite subschemes of $G$
containing $F_0$ is schematically dense in $G$, 
since the latter is locally of finite type. 
So $H = G$ and hence $X^G = X^{F_0}$ as desired.

If $G$ is smooth, then we argue similarly 
by considering finite closed subschemes that are \'etale; these 
also form a schematically dense family, since
$G(k^{\sep})$ is dense in $G_{k^{\sep}}$.
\end{proof}

\begin{remark}
\label{rem:fixed}
In characteristic zero, every algebraic group $G$ is generated 
by some finite \'etale subscheme $F$ (see \cite{Brion 2014}, 
Lemma 3). Hence $X^G = X^F$ for any $G$-scheme of finite type $X$. 
But this does not hold true in characteristic $p > 0$: 
For example, the additive group $\GG_a$ is not 
generated by any finite \'etale subscheme $F$, because  
the finite set $F(k^{\alg})$ generates a finite subgroup 
of $\GG_a(k^{\alg}) = k^{\alg}$. Moreover, one may check
that there exists no finite \'etale subscheme $F\subset \GG_a$ 
such that $X^{\GG_a} = X^F$ for all $\GG_a$-schemes $X$ 
of finite type.
\end{remark}

We will also need a version of a classical criterion
for a homomorphism of algebraic groups to be an 
isomorphism.

\begin{lemma}
\label{lem:hom}
Let  $f:G\ra H$ be a homomorphism of locally algebraic groups.
Assume that the induced maps
$ G(k^{\alg}) \to H(k^{\alg})$
and $ \Lie(G) \to \Lie(H)$ 
are bijective, and that $G$ is smooth. Then $f$ is an isomorphism.
\end{lemma}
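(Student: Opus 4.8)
The plan is to verify the classical recipe: show that $f$ has trivial kernel, deduce that $f$ is a closed immersion onto a subgroup scheme $G'\subset H$, and then exploit the smoothness of $G$ to force $G'=H$. First I would prove that $N=\Kernel(f)$ is trivial. The injectivity of $G(k^{\alg})\to H(k^{\alg})$ gives $N(k^{\alg})=\{e\}$, and the injectivity of $\Lie(f)$ gives $\Lie(N)=\Kernel(\Lie(f))=0$. Thus $N_{k^{\alg}}$ is a locally algebraic group over $k^{\alg}$ with a single rational point, hence infinitesimal, i.e.\ the $\Spec$ of a local Artinian $k^{\alg}$-algebra $A$ with residue field $k^{\alg}$; the vanishing $\Lie(N)=0$ means $\maxid_A=\maxid_A^2$, so $\maxid_A=0$ by nilpotence and $N$ is trivial. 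Since a homomorphism with trivial kernel is a monomorphism, and the map onto its scheme-theoretic image $G'=\Image(f)$ is faithfully flat with the same (trivial) kernel, $f$ factors as an isomorphism $G\xrightarrow{\ \sim\ }G'$ followed by the inclusion of the subgroup scheme $G'\subset H$, which is closed by the fact recalled in the preliminaries. (For faithful flatness onto the image it suffices to work on the neutral components $G^0\to H^0$, which are of finite type.) In particular $G'$ is smooth, $\Lie(G')=\Lie(H)$, and $G'(k^{\alg})=H(k^{\alg})$ inside $H$.

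Next I would run a dimension count using smoothness. Since $G'\cong G$ is smooth, $\dim(G')=\dim_k\Lie(G')=\dim_k\Lie(H)$. On the other hand the embedding dimension $\dim_k\Lie(H)$ at $e$ is at least the Krull dimension $\dim(H)$, which in turn is at least $\dim(G')$ because $G'\subset H$ is closed. Hence all these integers coincide; in particular $\dim_k\Lie(H)=\dim(H)$, so $H$ is regular at $e$, and after base change to $k^{\alg}$ the transitive action of $H(k^{\alg})$ on the closed points of each coset shows that $H_{k^{\alg}}$ is regular at every closed point, hence regular, hence reduced. Therefore $H$ is geometrically reduced, i.e.\ smooth. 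Now $G'^0\hookrightarrow H^0$ is a closed immersion of smooth connected --- thus integral --- group schemes of the same dimension, so $G'^0=H^0$, and consequently $f^0\colon G^0\to H^0$ is an isomorphism.

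Finally I would match component groups. Because $f^0$ is an isomorphism, the group isomorphism $G(k^{\alg})\to H(k^{\alg})$ carries $G^0(k^{\alg})$ onto $H^0(k^{\alg})$, hence induces a bijection $\pi_0(G_{k^{\alg}})\to\pi_0(H_{k^{\alg}})$. Over $k^{\alg}$ every connected component is a translate of the neutral component and carries a rational point, and $f_{k^{\alg}}$ maps each component of $G_{k^{\alg}}$ isomorphically --- by translation of $f^0$ --- onto a component of $H_{k^{\alg}}$, bijectively on the index sets. Thus $f_{k^{\alg}}$ is an isomorphism, and $f$ is an isomorphism by faithfully flat descent along $k\subset k^{\alg}$.

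The hard part is the second paragraph: converting the tangent-space isomorphism into genuine smoothness of $H$. This is exactly where the hypothesis that $G$ is smooth is indispensable --- it lets one read off $\dim(G')$ from $\dim_k\Lie(G')$ and thereby force $\dim_k\Lie(H)=\dim(H)$, which propagates to regularity and then, via homogeneity over $k^{\alg}$, to smoothness of $H$. The remaining bookkeeping --- the closed-immersion statement for a homomorphism with trivial kernel and the handling of the possibly infinite component group --- is routine structure theory of group schemes.
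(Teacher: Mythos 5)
Your proof is correct, and its overall architecture coincides with the paper's: reduce to $k=k^{\alg}$, kill the kernel using the two injectivity hypotheses, handle the neutral component, then match the component groups via the bijection on $k^{\alg}$-points. The one place where you genuinely diverge is the middle step. The paper disposes of $f^0:G^0\ra H^0$ in one line by invoking Demazure--Gabriel, Corollaire II.5.5.5, which says that a monomorphism of algebraic groups inducing an isomorphism on Lie algebras, with smooth source, is an open immersion; an open subgroup scheme of the connected $H^0$ is then all of $H^0$. You instead realize $f$ as a closed immersion onto a subgroup scheme $G'\subset H$ and run a dimension count: smoothness of $G'$ gives $\dim(G')=\dim_k\Lie(G')=\dim_k\Lie(H)\geq\dim(H)\geq\dim(G')$, forcing $H$ to be regular at the rational point $e$, hence smooth by translation over $k^{\alg}$, whence $G'^0=H^0$ as integral closed subschemes of equal dimension. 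Your route is more self-contained and has the merit of exhibiting exactly where the smoothness of $G$ is indispensable, and of producing the smoothness of $H$ as an explicit byproduct; the paper's route is shorter by citation and avoids the (harmless, but worth stating) point that the equality $\dim_k\Lie(H)=\dim(H)$ should be read off again after base change to $k^{\alg}$ rather than transported from $k$, since regularity alone does not survive inseparable extension --- here it does because both sides of the equality are stable under base change. Your treatment of $\pi_0$ via the bijection on $k^{\alg}$-points is equivalent to the paper's argument by contradiction using the trivial kernel.
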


\begin{proof}
By descent, it suffices to treat the case that  $k=k^\alg$.
The kernel $N$ of $f$ is a locally algebraic group that satisfies 
$N(k) = \{ e \}$ and $\Lie(N) = \{ 0 \}$,
in view of our assumptions. Thus $N$ is trivial.

Recall that $G^0$ and $H^0$ are algebraic groups; moreover,
$f$ restricts to a homomorphism $f^0 : G^0 \to H^0$. 
Applying \cite{Demazure;  Gabriel 1970},
Corollaire II.5.5.5, we see that $f^0$ is an open immersion,
and hence an isomorphism.
It remains to verify that the induced homomorphism 
$\pi_0(f) : \pi_0(G) = G/G^0\ra H/H^0 = \pi_0(H)$
between \'etale  group schemes is an isomorphism. 
These group schemes are actually constant, because $k$
is separably closed. Furthermore, $\pi_0(f)$ is surjective, 
because $G\ra H$ and $H\ra H/H^0$
are surjective on $k$-rational points.
Suppose the kernel of $\pi_0(f)$
contains some  non-trivial $\bar{g}\in G/G^0$.
Represent it by some $g\in G(k)$. Then $g\not\in G^0(k)$ and 
$f(g)\in H^0 = f(G^0)$. Thus, there exists $g' \in G^0(k)$
such that $f(g) = f(g')$. Then $g^{-1}\cdot g'$ is 
non-trivial and belongs to $N$, contradiction.
\end{proof}

The following observation will be a key ingredient in 
the proof of our main theorem in positive characteristics:

\begin{proposition}
\label{prop:subgroup}
Each connected algebraic group $G$ is isomorphic to a subgroup scheme 
of a smooth connected algebraic group $H$.
\end{proposition}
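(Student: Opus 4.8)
The plan is to reduce the whole statement to linear algebra by means of the structure theory of connected algebraic groups. In characteristic zero $G$ is automatically smooth by Cartier's theorem, so one may take $H=G$; hence I assume $p>0$ from now on. The starting point is the structure theorem over an arbitrary ground field: $G$ has a largest anti-affine subgroup $N=G_{\mathrm{ant}}$, which is smooth, connected, commutative and central, and a largest connected normal affine subgroup $B=G_{\mathrm{aff}}$, and the multiplication morphism $N\times B\to G$ is faithfully flat. Its kernel is the antidiagonal copy of the central affine subgroup scheme $C=N\cap B$, so that $G\cong (N\times B)/C$. The virtue of this decomposition is that all non-smoothness of $G$ is concentrated in the affine factor $B$, while $N$ is already smooth, and the only coupling between the two factors is through the central subgroup $C$.

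Next I would embed $B$ linearly and centralize $C$. Choose a faithful finite-dimensional representation $B\hookrightarrow\GL(V)$, and let $L$ be the centralizer of $C$ in $\GL(V)$, that is, the unit group of the commutant algebra $\End_C(V)=\{A\in\End(V): Ac=cA \text{ for all } c\in C\}$. As the unit group of a finite-dimensional associative $k$-algebra, $L$ is smooth and connected, being the nonvanishing locus of a determinant inside the affine space $\End_C(V)$. Since $C$ is central in $B$, we have $B\subseteq L$; and since $L$ centralizes the commutative group $C$, we have $C\subseteq Z(L)$. This is the key device: it manufactures a smooth connected overgroup of $B$ in which $C$ becomes central, with no hypothesis whatsoever on the internal structure of $C$.

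Finally I would glue the two factors. As $C$ is central both in $N$ and in $L$, the antidiagonal embedding $C\hookrightarrow N\times L$, $c\mapsto(c,c^{-1})$, lands in the center of $N\times L$, so $H:=(N\times L)/C$ is an algebraic group; being a quotient of the smooth connected group $N\times L$ by a central subgroup scheme, it is smooth and connected. The $C$-equivariant closed immersion $N\times B\hookrightarrow N\times L$ descends to a closed immersion $G=(N\times B)/C\hookrightarrow(N\times L)/C=H$, which exhibits $G$ as a subgroup scheme of the smooth connected group $H$, as desired.

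The substantive inputs are the structure theorem $G=G_{\mathrm{ant}}\cdot G_{\mathrm{aff}}$ over an arbitrary, possibly imperfect, field and the smoothness of anti-affine groups. The point I expect to be the main obstacle is that $C$ need \emph{not} be of multiplicative type: in characteristic $p$ the anti-affine group $N$ is semi-abelian, and $C$ can pick up infinitesimal unipotent pieces such as $\alpha_p$ coming from the abelian part, so one cannot simply diagonalize $C$ into a maximal torus of some general linear group and make it central by weight considerations. The commutant-algebra construction of $L$ is precisely what sidesteps this difficulty, since it centralizes $C$ irrespective of its infinitesimal structure. The remaining verification — that a $C$-equivariant closed immersion induces a closed immersion on the quotients by the free central $C$-action — is routine descent and I would only check it briefly.
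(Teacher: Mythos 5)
Your proof is correct and takes essentially the same route as the paper: write $G=(G_{\ant}\times G_{\aff})/C$ with $C=G_{\ant}\cap G_{\aff}$ central, embed the affine factor into some $\GL(V)$, enlarge it to the centralizer of $C$ there, and re-glue with the smooth central factor $G_{\ant}$. The only difference is cosmetic: where the paper cites Herpel's lemma (based on Demazure--Gabriel) for smoothness and connectedness of that centralizer, you prove it directly as the unit group of the commutant algebra $\End_C(V)$, an open dense subscheme of an affine space, which is a valid argument.
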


\begin{proof}
There exists a connected 
affine normal subgroup scheme $N$ such that $G/N$ 
is an abelian variety (see for example \cite{Milne 2017}, 
Theorem 8.28). 
Also, the affinization 
morphism $G \to G^{\aff} = \Spec H^0(G,\O_G)$ 
is a faithfully flat homomorphism of algebraic groups. 
Moreover, its kernel $G_{\ant}$ is smooth, connected
and contained in the center  (see 
\cite{Demazure; Gabriel 1970},
Th\'eor\`eme III.3.8.2 and 
Corollaire III.3.8.3, or \cite{Brion 2017}, Theorem 1).

We have $G = G_{\ant} N$.
Indeed, $G_{\ant} N$ is a normal subgroup scheme
of $G$, and the quotient $G/G_{\ant} N$
is both affine (as a quotient of $G/G_{\ant}$)
and an abelian variety (as a quotient of $G/N$).
So we obtain an exact sequence 
\[ 1 \longrightarrow G_{\ant} \cap N
\longrightarrow G_{\ant} \times N 
\longrightarrow G \longrightarrow 1, \]
where $G_{\ant} \cap N$ is identified with a
central subgroup scheme of $G_{\ant} \times N$.

We may identify the affine algebraic group $N$ with a subgroup scheme
of some general linear group $\GL_m$. By construction, $N$ is contained 
in the centralizer 
$C = C_{\GL_m}(G_{\ant} \cap N)$.
Moreover, $C$ is smooth (see 
\cite{Herpel 2013}, Lemma 3.5,
based on \cite{Demazure; Gabriel 1970},
Proposition II.2.1.6).
Hence $G$ is isomorphic to a subgroup scheme
of the quotient 
$H=(G_{\ant} \times C)/(G_{\ant} \cap N)$,
which is smooth as well. Since $G$ is connected, 
it is already contained in the neutral component $H^0$.
\end{proof}

\begin{remark}
\label{rem:subgroup}
Each algebraic group $G$ is an extension of the finite \'etale group $G/G^0$
by the connected algebraic group $G^0$, but the projection $G\ra G/G^0$ 
usually has no section. By the above, we may embed $G^0$ into some
smooth algebraic group, but we do not know how to carry this over to $G$.

However, it is easy to construct
examples of   algebraic groups $G$  that cannot 
be embedded into  a connected algebraic group: Let $E$ be an elliptic 
curve and consider the semidirect product $G = E \rtimes \{ \pm 1 \}$, 
where $\{ \pm 1 \}$ denotes the constant group 
scheme of order two acting on $E$ via $x \mapsto \pm x$.  
Then $G$ is a smooth disconnected algebraic group. 
Suppose $G$ is a subgroup scheme of some 
connected algebraic group $H$, then $E$ is central 
in $H$ (see \cite{Milne 2017}, Corollary 8.13). 
But $E$ is not central in $H$, a contradiction.
\end{remark}

\section{The main result  in the  smooth case}
\label{sec:smooth}

We now  formulate  the main result of this paper:

\begin{theorem}
\label{thm:main}
For any connected algebraic group $G$ over a field $k$, there is a projective, 
geometrically integral $G$-scheme $X$ with $\Aut_X^0= G$, and generically free 
action.  If $G$ is smooth of dimension $n\geq 0$, then one may choose  
$X$ normal with $\dim(X)=\max(2n,3)$.
\end{theorem}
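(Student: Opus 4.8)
The plan is to follow the two-step strategy announced in the introduction. Since this section concerns smooth $G$, I would establish that case here and defer non-smooth $G$ to the later treatment (embedding into a smooth group via Proposition~\ref{prop:subgroup}, then blowing up a free orbit closure and analyzing Fitting ideals). So assume $G$ is smooth and connected of dimension $n$. I would single out the degenerate ranges $n=0$ and $n=1$ for separate, lower-dimensional constructions, and concentrate on $n\ge 2$, where the prescribed dimension is $\max(2n,3)=2n$.

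The construction starts from an equivariant completion $G\subset V$: a projective, normal, geometrically integral scheme carrying the $G\times G$-action by left and right translation that extends the biaction on the open dense orbit $G$. Setting $Y=V\times V$ produces a projective, normal, geometrically integral scheme of dimension $2n$, in which a copy of $G$ sits inside $\Aut_Y^0$ through the translation action. Inside $Y$ I would choose a closed subscheme $Z$ assembled from graphs of translation morphisms $V\to V$ — the diagonal $\Delta_V$ and its translates — engineered so that its stabilizer $N_{\Aut_Y^0}(Z)$ equals exactly the image of $G$. Passing to the blow-up $X=\Bl_Z(Y)$ then gives a projective, geometrically integral, normal scheme of dimension $2n$; by the equivariance of blow-ups recalled above, $\Aut_{Y,Z}=N_{\Aut_Y}(Z)\supset G$ acts on $X$ with structure morphism $f:X\to Y$ equivariant, whence $G\subset\Aut_X^0$. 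Since $f$ is an isomorphism over the open orbit, where left translation acts freely, the $G$-action on $X$ is generically free.

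For the reverse inclusion I would invoke Blanchard's Lemma. As $Y$ is normal and $f$ is proper and birational with $\O_Y\xrightarrow{\sim}f_*\O_X$, it induces a homomorphism $f_*:\Aut_X^0\to\Aut_Y^0$. This map is injective, because an automorphism of the integral scheme $X$ whose descent is $\id_Y$ restricts to the identity on the dense open locus $X\setminus E$ where $f$ is an isomorphism, hence is the identity. The exceptional divisor $E$ is intrinsically attached to $X$, so it is $\Aut_X^0$-stable, and its schematic image $f(E)=Z$ is therefore stable under $f_*(\Aut_X^0)$ by Lemma~\ref{lem:image}; thus $f_*$ factors through $N_{\Aut_Y^0}(Z)=G$. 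Since $f_*$ composed with the embedding $G\hookrightarrow\Aut_X^0$ recovers the given inclusion $G\hookrightarrow\Aut_Y^0$, the map $G\subset\Aut_X^0$ becomes bijective on $k^{\alg}$-points and on Lie algebras. As $G$ is smooth, Lemma~\ref{lem:hom} upgrades this to an isomorphism $\Aut_X^0=G$.

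The main obstacle is the middle step: controlling $\Aut_{V\times V}^0$, which for a completion $V$ is typically far larger than $G\times G$ (already $V=\PP^n$ has $\Aut^0=\PGL_{n+1}$), and then choosing the graph-subscheme $Z$ so that no spurious connected automorphism of $V\times V$ survives in the stabilizer. I expect the verification $N_{\Aut_Y^0}(Z)=G$ to be the genuine crux, requiring a precise description of the connected automorphisms of the self-product in terms of those of $V$ together with the boundary $V\setminus G$. The excluded ranges $n=0,1$ are truly special — for the trivial group one needs a three-dimensional projective model with no nontrivial connected automorphisms, and for $n=1$ the product $V\times V$ is only a surface and carries too many symmetries — so each would demand an ad hoc construction realizing $\Aut_X^0=G$ with $\dim X=3$.
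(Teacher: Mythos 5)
Your outline matches the paper's architecture (equivariant completion $V$, $Y=V\times V$, a union of graphs as center, blow-up, Blanchard's Lemma, Lemma~\ref{lem:hom}), but the two steps that carry the actual mathematical weight are missing. First, you defer the identification $\Aut^0_{Y,Z}=G$ as ``the genuine crux'' without resolving it. The paper's resolution is not a description of $\Aut^0_{V\times V}$ in terms of the boundary $V\smallsetminus G$: Blanchard's Lemma gives $\Aut^0_Y=\Aut^0_V\times\Aut^0_V$ outright, the diagonal graph $\Gamma_e=\Delta(V)$ forces the normalizer of $Z$ into $\Delta(\Aut^0_V)$ (Lemma~\ref{lem:diag}), and the remaining graphs $\Gamma_f$ cut this down to $\bigcap_f\Delta(\Aut^{0,e\times f}_V)$. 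That this intersection equals $G$ rests on two prior results you do not invoke: the isomorphism $\psi:G\to\Aut^{e\times G}_V$ (Lemma~\ref{lem:psi}, itself proved via Lemma~\ref{lem:hom} using the open orbit), and the replacement of the centralizer of $e\times G$ by the centralizer of a finite \'etale subscheme $F\subset G$ (Lemma~\ref{lem:fixed}) --- without the latter, $Z$ cannot be taken to be a finite union of graphs in the first place.

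Second, and more seriously, your reverse inclusion breaks where you assert that $E$ is $\Aut^0_X$-stable because it is ``intrinsically attached to $X$''. That reasoning only yields stability under $\Aut_X(k^{\alg})$, hence surjectivity of $G\to\Aut^0_X$ on $k^{\alg}$-points; it says nothing about infinitesimal automorphisms, since $\Aut^0_X$ need not be smooth and a global vector field on $X$ need not a priori preserve the ideal sheaf of $E$. Lemma~\ref{lem:hom} requires surjectivity on Lie algebras as well, and this is exactly where the paper does real work: it proves $H^0(E_0,\catO_{E_0}(E_0))=0$ using $f_{0,*}(\catO_{X_0}(E_0))=\catO_{Y_0}$ and $R^1f_{0,*}(\catO_{X_0})=0$, which forces every $D\in\Der(\catO_X)$ to preserve $\catI_E$ and hence to descend into $\Der(\catO_Y;\catI_Z)=\Lie(G)$. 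This vanishing needs $\codim_{Y_0}Z_0=n\geq 2$, which is the true reason $n=1$ is excluded (the center is then a divisor, so the blow-up and the rigidity argument degenerate), not an excess of symmetries of $V\times V$; the paper's fix for $n=1$ is to pass to $Y\times E$ for an elliptic curve $E$ with $\Aut^0_{E,0}$ trivial and to blow up $Z\times 0$ in that threefold.
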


Let us give an immediate application in characteristic $p>0$, which was actually the starting point for our research:
Recall that for any group scheme $G$ and any scheme $X$
the Lie algebras $\Lie(G)$ and $H^0(X,\Theta_X) = \Der(\catO_X)$ 
carry as additional structure the so-called $p$-map 
$D\mapsto D^{[p]}$. For the Lie algebra of global vector fields, 
this is just the $p$-fold composition in the  algebra 
$\Diff(\catO_X)$ of $k$-linear differential operators.
This leads to the notion of \emph{restricted Lie algebra}. 
By \cite{Demazure; Gabriel 1970}, Proposition II.7.4.1,  
the functor $G\mapsto \Lie(G)$ is an equivalence 
between the category of algebraic groups annihilated
by the relative Frobenius, and the category of finite-dimensional 
restricted Lie algebras. Such algebraic groups contain but one point. As an immediate
consequence of the theorem we get:

\begin{corollary}
\label{cor:lie algebra}
For every finite-dimensional restricted Lie algebra $\lieg$ over our ground field $k$ of characteristic $p>0$,
there is a  projective,  geometrically integral scheme $X$ such that the restricted Lie algebra $H^0(X,\Theta_X)$
is isomorphic to $\lieg$.
\end{corollary}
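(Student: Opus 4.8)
The plan is to realize $\lieg$ as the Lie algebra of a suitable connected algebraic group and then invoke Theorem \ref{thm:main}. First I would use the equivalence between finite-dimensional restricted Lie algebras and algebraic groups annihilated by the relative Frobenius (\cite{Demazure; Gabriel 1970}, II.7.4.4.1), recalled just above: applied to $\lieg$, it produces an algebraic group $G$ together with an isomorphism $\Lie(G)\cong\lieg$ of restricted Lie algebras. Such a $G$ consists of a single point, so that $G^0=G$ and $G$ is connected; being infinitesimal and possessing the rational point $e$, it is automatically geometrically connected, hence qualifies as an input for the main theorem.

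Next I would apply Theorem \ref{thm:main} to this connected algebraic group $G$, obtaining a projective, geometrically integral scheme $X$ together with an isomorphism of group schemes $\Aut_X^0\cong G$. Passing to Lie algebras, using that $\Aut_X$ and its neutral component $\Aut_X^0$ share the same Lie algebra, and combining this with the identification $\Lie(\Aut_X)=H^0(X,\Theta_X)=\Der(\O_X)$ recalled in Section \ref{sec:prel}, I obtain a chain of isomorphisms
\[ H^0(X,\Theta_X)=\Lie(\Aut_X)=\Lie(\Aut_X^0)\cong\Lie(G)\cong\lieg. \]
This already yields the desired scheme once one knows the isomorphism is of the right kind.

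The one point requiring care is that all of these identifications must respect the additional $p$-map, so that the composite is an isomorphism of \emph{restricted} Lie algebras and not merely of Lie algebras. For the isomorphism $\Lie(\Aut_X^0)\cong\Lie(G)$ this is automatic, since the $p$-operation is functorial with respect to homomorphisms of group schemes. For the identification $\Lie(\Aut_X)=H^0(X,\Theta_X)$ it rests on the fact that the restricted structure on the Lie algebra of a group scheme in characteristic $p$ corresponds, under the identification with global vector fields, to the $p$-fold composition $D\mapsto D^{[p]}$ in the algebra $\Diff(\O_X)$ of $k$-linear differential operators, as discussed immediately before the statement. I expect this compatibility to be the only genuine point to pin down; it is a standard feature of the construction of $\Aut_X$ in \cite{Matsumura; Oort 1967} rather than a real difficulty, while everything else is a direct application of the main theorem together with the cited equivalence of categories.
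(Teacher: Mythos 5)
Your argument is correct and is exactly the route the paper intends: the paper states the corollary as an ``immediate consequence'' of Theorem \ref{thm:main}, obtained by applying the Demazure--Gabriel equivalence to produce an infinitesimal connected group $G$ with $\Lie(G)\cong\lieg$ and then reading off $H^0(X,\Theta_X)=\Lie(\Aut_X^0)$. Your extra paragraph checking compatibility of the $p$-maps is a reasonable point to make explicit, but it is not a genuine departure from the paper's (unwritten) proof.
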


This is quite different from the case of characteristic zero,
where the Lie algebras of global vector fields on proper schemes
are exactly the finite-dimensional linear Lie algebras
(see \cite{Brion 2014}, Corollary 1).

Let us now turn to the proof of Theorem \ref{thm:main}.
This section contains the arguments  under the additional assumption 
that $G$ is smooth. The rough idea is as follows:
In a first step, we realize $G$ 
as the group scheme of those automorphisms of 
a normal, projective, geometrically integral 
scheme $V$ that  commute with finitely many 
automorphisms $f_1,\ldots, f_m$.
In a second step, we   replace 
this commutator condition with a stabilizer condition 
for  a closed subscheme;
for this, we replace $V$ with $Y = V \times V$, with
closed subscheme the union $Z$ of the graphs
$\Gamma_{f_i}$. 
In a third step, we consider the normalized blow-up $X\ra Y$ 
with center $Z$, and check that $G= \Aut^0_X$ provided 
that $n \geq 2$. The case $n=1$ requires some additional 
attention, and is reduced to $n\geq 2$  via a small trick. 

\bigskip\noindent 
{\bf Step 1.}
The product group scheme $G\times G$ acts from the left 
on the scheme $G$ via the formula $(g_1,g_2)\cdot g=g_1gg_2^{-1}$.
This action is transitive, and the stabilizer of $e\in G$ 
is the diagonal $\Delta(G)$; we may thus identify $G$ with
the homogeneous space $(G \times G)/\Delta(G)$. 
According to \cite{Milne 2017}, Theorem 8.44, $G$ admits a 
$G \times G$-\emph{equivariant projective completion} $V$.
In other words, $V$ is a projective $G\times G$-scheme, 
containing $G$ as an open set that is schematically dense and  
$G\times G$-stable. 
Clearly, $V$ is geometrically integral,
since $G$ is smooth and geometrically irreducible 
(\cite{SGA 3a}, Expos\'e $\text{VI}_{A}$, Proposition 2.4).
Also, the $G \times G$-action on $V$ lifts uniquely to the normalization 
of $V$,  by \cite{Brion 2017}, Proposition 2.5.1. 
Thus, we may also assume that $V$ is normal.

Denote by $G \times e$ and $e\times G$ 
the respective left and right copies of $G$ inside $G\times G$. 
These subgroup schemes commute with each other. This gives 
a homomorphism of group schemes
$$
\psi: G = G \times e \longrightarrow 
\Aut_V^{e \times G},
$$
where $\Aut_V^{e \times G}$ denotes the
centralizer of $e \times G\subset \Aut_V$. This is indeed the 
scheme of fixed points for the
conjugacy action of $e \times G$ on $\Aut_V$.

\begin{lemma}\label{lem:psi}
In the above situation,  $\psi$ is an isomorphism.
\end{lemma}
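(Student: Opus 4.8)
The plan is to apply Lemma~\ref{lem:hom} to the homomorphism $\psi$. Since $G$ is smooth by the standing assumption of this section, it suffices to show that $\psi$ induces a bijection $G(k^{\alg})\to\Aut_V^{e\times G}(k^{\alg})$ and a bijection $\Lie(G)\to\Lie(\Aut_V^{e\times G})$. Both conditions may be checked after extension of scalars, so I would work over $k^{\alg}$, assuming $k=k^{\alg}$. Recall that the right copy $e\times G$ acts on $V$ through the automorphisms $\rho_a : v\mapsto v\cdot a$, that $G\subset V$ is the open dense orbit for this right action, and that $\psi(g)$ is the extension to $V$ of left multiplication by $g$ on $G$, which commutes with every $\rho_a$.

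For the $k^{\alg}$-points, injectivity is immediate: if $\psi(g)=\id_V$ then $gh=h$ for all $h\in G(k^{\alg})$, whence $g=e$, using that $G$ is schematically dense in $V$. For surjectivity, take $\phi\in\Aut_V^{e\times G}(k^{\alg})$, so that $\phi(v\cdot a)=\phi(v)\cdot a$. Being equivariant, $\phi$ carries the open dense right orbit $G$ to another right orbit $\phi(G)$, which is again open and dense; since two dense orbits in the irreducible $V$ must meet, and distinct orbits are disjoint, $\phi(G)=G$. Hence $\phi$ restricts to an automorphism of $G$ commuting with all right translations, and setting $g=e$ in $\phi(ga^{-1})=\phi(g)a^{-1}$ gives $\phi(h)=\phi(e)\,h$ for all $h$. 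Thus $\phi|_G$ is left multiplication by $g_0:=\phi(e)\in G(k^{\alg})$, and by schematic density of $G$ in $V$ we conclude $\phi=\psi(g_0)$.

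For the Lie algebras I would argue with the fundamental vector fields of the left action. The map $\Lie(G)\to H^0(V,\Theta_V)$ underlying $\Lie(\psi)$ sends $X$ to the vector field it generates, whose value at $e\in G\subset V$ recovers $X$ under the identification $T_eV=\Lie(G)$; hence $\Lie(\psi)$ is injective. Next, $\Lie(\Aut_V^{e\times G})$, being the Lie algebra of the fixed-point scheme of the conjugation action of $e\times G$ on $\Aut_V$, injects into the invariant subspace $H^0(V,\Theta_V)^{e\times G}$. I claim the composite $\Lie(G)\to H^0(V,\Theta_V)^{e\times G}$ is already bijective. Indeed, restricting an $e\times G$-invariant global field to the dense open $G$ produces a field on $G$ invariant under all right translations, and these right-invariant fields are exactly the restrictions of the fundamental fields of the left action, a space identified with $\Lie(G)$. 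Because $V$ is normal, $\Theta_V=\uHom(\Omega^1_V,\O_V)$ is torsion-free, so restriction to the dense open $G$ is injective; thus each invariant global field is the restriction of a unique fundamental field, giving the claimed bijection.

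Finally I would combine these observations: the composite $\Lie(G)\to\Lie(\Aut_V^{e\times G})\hookrightarrow H^0(V,\Theta_V)^{e\times G}$ is bijective while the second arrow is injective, which forces both arrows to be isomorphisms, so $\Lie(\psi)$ is bijective. Lemma~\ref{lem:hom} then yields that $\psi$ is an isomorphism. I expect the genuine difficulty to be this Lie-algebra surjectivity in characteristic $p>0$: there $\Lie(\Aut_V^{e\times G})$ can be strictly smaller than the full invariant space, so one may not identify the Lie algebra of the centralizer with the invariants a priori. The sandwich above is designed to sidestep this, and it relies crucially on normality of $V$ to control the restriction of global vector fields to the dense orbit.
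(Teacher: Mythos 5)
Your proof is correct and follows essentially the same route as the paper: reduce to $k$ algebraically closed, apply Lemma~\ref{lem:hom}, obtain surjectivity on points from the fact that $G\subset V$ is the open dense $e\times G$-orbit, and obtain surjectivity on Lie algebras by restricting invariant vector fields to the schematically dense open subset $G$ and identifying the right-invariant derivations of $\O_G$ with $\Lie(G)$. The only divergence is minor: where you sandwich $\Lie(\Aut_V^{e\times G})$ between $\Lie(G)$ and $H^0(V,\Theta_V)^{e\times G}$ to avoid committing to an identification of the centralizer's Lie algebra with the invariant derivations, the paper simply quotes Demazure--Gabriel, Proposition II.4.2.5, for the equality $\Lie(\Aut_V^{e\times G})=\Der^{e\times G}(\O_V)$ and then exhibits the same restriction map $i^*$ as an injective left inverse of $\Lie(\psi)$.
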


\begin{proof}
It suffices to treat the case that $k$ is algebraically closed.
Observe that the kernel of the  
$G \times G$-action on $G$ equals the diagonal of the center $Z(G)$. Thus 
this is also the kernel of the 
$G \times G$-action on $V$, i.e., of the
corresponding homomorphism  $G \times G \ra \Aut_V$. 
Since  the intersection of $G\times e$ with the diagonal of the center is trivial, the homomorphism
$\psi$ has a trivial kernel, and hence is a closed immersion 
(as follows from \cite{Demazure; Gabriel 1970}, Proposition II.5.5.1).
In order to apply Lemma \ref{lem:hom}, we have to show that $\psi$
induces a surjection on rational points and Lie algebras.

Let $h \in \Aut^{e \times G}_V(k)$. Then $h:V\ra V$  stabilizes $G\subset V$,
which is the open orbit of the action of $e \times G$.
Since  the induced  $h:G\ra G$
commutes with the $e\times G$-action by multiplication from the right, it must be 
the left multiplication by some $g \in G(k)$. Since $G(k)\subset V$
is schematically dense,
we obtain $h = \psi(g)$. Thus $\psi$ is surjective on rational points. 

It remains to check that the injective map 
$
\Lie(\psi) : \Lie(G) \ra \Lie(\Aut^{e \times G}_V) 
$ is bijective. The term on the right is  $\Der^{e \times G}(\O_V)$
by \cite{Demazure; Gabriel 1970}, Proposition II.4.2.5. 
Let $i:G\ra V$ be the inclusion map of the open set $G$, which is schematically dense
and $G\times G$-equivariant. Then $i$ induces an injection of Lie algebras
$$
i^* : \Der^{e \times G}(\O_V) 
\longrightarrow
\Der^{e \times G}(\O_G) = \Lie(G),
$$
such that $i^* \circ \Lie(\psi) = \id_{\Lie(G)}$.
Having an injective left inverse, the injective linear map 
$\Lie(\psi)$ must be bijective.
\end{proof}

In particular, the homomorphism $\psi$ gives an identification 
$G=\Aut_V^{0, e \times G}$,
the centralizer of $e \times G$ in $\Aut^0_V$.

\medskip\noindent
{\bf Step 2.}
We now apply Lemma \ref{lem:fixed} to the conjugacy action of $e \times G$
on $\Aut^0_V$, and find some finite \'etale closed subscheme $F\subset G$
such that $G=\Aut_V^{0,e\times F}$.
Clearly, we may assume in addition that $e \in F(k)$. 
Consider the graph morphism
\[ \gamma : F \times V \longrightarrow 
V \times V, \quad
(g,v) \longmapsto (v, (e,g) \cdot v). \]
This morphism is finite: Obviously the morphism 
$ F\times V\ra V$ given by $(g,v)\mapsto v$ is finite. 
The same holds for $(g,v)\mapsto (e,g)\cdot v$, because 
it is isomorphic to the projection
via $(g,v)\mapsto (g,(e,g)^{-1}\cdot v)$.
Using that products and compositions of finite morphisms are finite, 
and that $F\times V$ is separated,
we see that the graph morphism is finite.

We set for simplicity $Y = V \times V$, 
and denote by $Z \subset Y$ the schematic image 
of $\gamma$. 
We now collect some observations and results
on the geometry of $Y$ and $Z$. 
Since $V$ is projective and geometrically 
integral, we see that $Y$ is projective and 
geometrically integral as well. Moreover, 
we have $\dim(Y) = 2 \dim(V) = 2 \dim(G) = 2n$. 
Also, $Z$ is geometrically reduced and 
equidimensional of dimension $n$.

Note that $V$ is not necessarily geometrically
normal (this happens if $G$ is a non-rational form
of the additive group, see \cite{Russell 1970}). 
Still, we have: 

\begin{lemma}\label{lem:nor}
The scheme $Y$ is normal.
\end{lemma}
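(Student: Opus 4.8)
The plan is to show normality of $Y = V \times V$ by proving that the self-product of a normal, geometrically integral scheme over a field is again normal. The key input is that normality is a local property characterized by Serre's criterion $(R_1) + (S_2)$, and both conditions behave well under products when one factor is geometrically normal. However, the Lemma explicitly warns that $V$ need \emph{not} be geometrically normal (the twisted-form-of-$\GG_a$ case), so the naive product argument does not apply directly. First I would reduce to checking $Y$ is regular in codimension one and satisfies Serre's $(S_2)$, working locally on the affine pieces $\Spec(A \otimes_k A)$ where $A$ is the coordinate ring of an affine open of $V$.

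The hard part will be dealing with the failure of geometric normality of $V$. The clean approach is to avoid geometric normality entirely and instead exploit that $V$ is normal \emph{and} geometrically integral. Since $V$ is geometrically integral, the function field $k(V)$ is a primary (indeed, separable enough) extension of $k$, and in particular $k$ is separably algebraically closed in $k(V)$. I would use this to argue that the generic fiber of the projection $\pr_1 : Y \to V$ is $V_{k(V)}$, the base change of $V$ to the function field $K = k(V)$ of the first factor, and that normality of $Y$ can be checked fiber-wise along $\pr_1$ combined with normality of the base $V$. Concretely, I would invoke the standard result (EGA IV, around 6.5.4 or 6.14) that if $f : Y \to V$ is a flat morphism with normal base $V$ and normal fibers, then $Y$ is normal; here $\pr_1$ is flat (it is a base change of $V \to \Spec k$), the base $V$ is normal by Step 1, and each fiber over a point $v \in V$ with residue field $\kappa(v)$ is $V_{\kappa(v)}$.

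This shifts the burden onto showing that $V_{\kappa(v)}$ is normal for every point $v \in V$, i.e. that $V$ is \emph{geometrically normal over its own residue fields}, or at least stays normal after the relevant residue field extensions. This is exactly where geometric normality would ordinarily be invoked, so the genuine obstacle is handling the non-geometrically-normal case. The resolution I would pursue rests on the group-scheme structure: $V$ is an equivariant completion of the \emph{smooth} group $G$, and $G \subset V$ is schematically dense and $G \times G$-stable. Smoothness of $G$ means $G$ is geometrically regular, hence geometrically normal; the failure of geometric normality of $V$ is confined to the boundary $V \smallsetminus G$. Because the $G \times G$-action is transitive on the dense open $G$ and the residue field extensions $\kappa(v)/k$ arising from the first factor are themselves function-field-type extensions, I would argue that normality is preserved after these particular base changes by combining homogeneity on the open orbit with Serre's criterion on the boundary, using that the non-normal locus of any $V_K$ would have to be a proper closed $G_K \times G_K$-stable subset disjoint from the dense orbit $G_K$.

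Alternatively, and more robustly, I would fall back on a direct Serre-criterion argument: since $V$ satisfies $(R_1)$ and $(S_2)$ and is geometrically integral, $A \otimes_k A$ is a domain (geometric integrality of one factor guarantees the tensor product is integral), it satisfies $(S_2)$ because $(S_2)$ for $k$-algebras is preserved under tensoring a geometrically integral normal ring with a normal ring over a field, and it satisfies $(R_1)$ because the singular locus of $V \times V$ is contained in $\Sing(V) \times V \cup V \times \Sing(V)$ together with contributions from non-smoothness over non-perfect residue fields, all of which have codimension at least two once one checks that $V$ being normal forces $\Sing(V)$ to have codimension $\geq 2$. I expect the cleanest writeup to verify $(S_2)$ and $(R_1)$ for $Y$ by passing to $V_{k^{\mathrm{sep}}}$, using that $V$ becomes geometrically normal after the separable closure (since the only obstruction to geometric normality is inseparability, which is governed by the residue field of the generic point and the fact that $k$ is separably closed in $k(V)$), and then descending normality along the faithfully flat extension $k \subset k^{\mathrm{sep}}$.
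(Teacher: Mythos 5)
Your proposal circles the right difficulty --- the failure of geometric normality of $V$ --- but none of the routes you sketch actually closes it, and the one you call ``cleanest'' is a dead end. The fibral criterion (flat morphism, normal base, normal fibers) needs \emph{every} fiber $V_{\kappa(v)}$ of $\pr_1$ to be normal, including fibers over boundary points $v \in V \smallsetminus G$ whose residue fields may be inseparable over $k$; that is precisely the geometric normality you do not have. Passing to $k^{\sep}$ does not repair this: $V_{k^{\sep}}$ is indeed normal (normality ascends along separable field extensions), but $k^{\sep}$ is still imperfect when $k$ is, so $V_{k^{\sep}}$ is no more geometrically normal than $V$ was, and you face the identical problem for $V_{k^{\sep}} \times_{k^{\sep}} V_{k^{\sep}}$. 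Your claim that the non-regular locus of $V \times V$ is $\Sing(V)\times V \cup V \times \Sing(V)$ plus inseparability ``contributions'' of codimension $\geq 2$ is exactly the assertion that needs proof; a priori the non-regular locus of $V_{\reg}\times V_{\reg}$ could contain a divisor, and the homogeneity remark (the bad locus is $G_K\times G_K$-stable and misses the open orbit) does not exclude this, since the boundary itself is typically a divisor.

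The idea you are missing is to use the smooth dense open $G \subset V$ directly: since $G$ is smooth over $k$, the projection $G \times V_{\reg} \to V_{\reg}$ is smooth, so $G \times V_{\reg}$ and $V_{\reg}\times G$ are regular, and their complement inside $V_{\reg}\times V_{\reg}$ is $(V_{\reg}\smallsetminus G)\times(V_{\reg}\smallsetminus G)$, which has codimension $\geq 2$. Combined with the fact that $Y \smallsetminus (V_{\reg}\times V_{\reg})$ has codimension $\geq 2$ (as $V$ satisfies $(R_1)$) and that $(S_2)$ passes from $V$ to the product $V\times V$, Serre's criterion gives normality of $Y$; this is the paper's argument. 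Your instinct about the generic fiber could be salvaged into an alternative proof of $(R_1)$ --- every point of $Y$ of codimension $\leq 1$ maps to the generic point under one of the two projections, and $V_{k(V)}$ is normal because $k(V)=k(G)$ is separably generated over $k$ by smoothness of $G$ --- but as written your proposal does not carry this out and instead demands normality of all fibers.
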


\begin{proof}
We use Serre's Criterion. Since $V$ satisfies Serre's Condition $(S_2)$, 
the same holds for $Y$. It remains to verify that $Y$ is regular in codimension one, 
in other words,  satisfies  the condition $(R_1)$. Since this holds for $V$,
the closed set $Y\smallsetminus V_{\reg} \times V_{\reg}$ has codimension at least two,
where $V_{\reg}$ denotes the regular locus. 
Thus, it suffices in turn to show that
$V_{\reg} \times V_{\reg}$ satisfies $(R_1)$.
As $G$ is smooth, $G \times V_{\reg}$ is regular and therefore
$$
(V_{\reg} \times G) \cup (G \times V_{\reg}) \subset (V \times V)_{\reg}.
$$
Moreover, the complementary closed set
has codimension at least two, because 
\[ (V_{\reg} \times V_{\reg}) \smallsetminus 
((V_{\reg} \times G) \cup (G \times V_{\reg}))
= (V_{\reg} \smallsetminus G) \times 
(V_{\reg} \smallsetminus G). \]
Thus, $V_{\reg} \times V_{\reg}$ indeed satisfies $(R_1)$.
\end{proof}

By Galois descent, we may identify the 
finite \'etale subscheme $F$ of $G$ with 
the finite subset 
$F(k^{\sep}) \subset G(k^{\sep})$,
equipped with the  action of the absolute
Galois group $\Gal(k^\sep/k)$. The scheme 
$Z_{k^{\sep}}$ is the union of the graphs 
$\Gamma_f$, where $f \in F(k^{\sep})$ and 
$\Gamma_f$ denotes the schematic image of 
the closed immersion
\[ V_{k^{\sep}} \longrightarrow Y_{k^{\sep}},
\quad v \longmapsto (v,(e,f) \cdot v).  \]
Moreover, each $\Gamma_f$ is an irreducible
component of $Z_{k^{\sep}}$. Also, note that
$\Gamma_e$ is the diagonal $\Delta(V)$. 
The action of $G_{k^{\sep}} \times e$ on 
$Y_{k^{\sep}}$ 
stabilizes each graph $\Gamma_f$, and hence
$Z$ is $G \times e$-stable. 

Let $Y_0 \subset G \times G$ be the
open set such that $Y_{0,k^{\sep}}$ consists 
of the points which lie in at most one graph 
$\Gamma_f$, where $f \in F(k^{\sep})$. Clearly,
$Y_0$ is smooth, dense and $G \times e$-stable;
moreover, $G \times e$ acts freely on $Y_0$.
In particular, the $G \times e$-action on $Y$
is generically free. Likewise, 
$Z_0 = Z \cap Y_0$ is an open set of $Z$ which
is smooth, dense and $G\times e$-stable.

As a consequence of Blanchard's Lemma, the canonical 
homomorphism 
\[ \Aut^0_V \times \Aut^0_V 
\longrightarrow \Aut^0_Y, 
\quad (g,h) \longmapsto g \times h \]
of algebraic groups is an isomorphism
(see \cite{Brion 2017}, Corollary 7.2.3 for details). 
Throughout, we regard it as an identification.
Also, note that the diagonal homomorphism
\[ \Delta : 
\Aut^0_V \longrightarrow \Aut^0_Y, 
 \quad g \longmapsto  g \times g \]
 of algebraic groups is a closed immersion.

\begin{lemma}\label{lem:diag} 
With the above notation, we have
$\Stab_{\Aut^0_Y}(\Gamma_e) = \Delta(\Aut^0_V)$.
Moreover, 
$$
\Stab_{\Aut^0_{Y_{k^{\sep}}}}(\Gamma_e) \cap
\Stab_{\Aut^0_{Y_{k^{\sep}}}}(\Gamma_f)
= \Delta(\Aut^{0,e \times f}_{V_{k^{\sep}}})
$$
for every $f \in F(k^{\sep})$.
\end{lemma}

\begin{proof}
Let $T$ be a scheme and   
$g,h \in \Aut(V \times T/T)$. 
Then  the resulting 
$g \times h \in  \Aut(Y\times T/T)$ stabilizes
$\Delta(V) \times T$ if and only if
for any $T'\ra T$ and $v \in V(T')$, 
we have 
$(g \times h)(v,v) \in \Delta(V)(T')$, in other words $g(v) = h(v)$. This yields the first
equality. The second equality is checked
similarly.
\end{proof}
 
Next, recall the identification $G \times e = \Aut_V^{0,e \times G}$;
we denote by $\eta : G \ra \Aut_V^0$ the corresponding
closed immersion.

\begin{lemma}\label{lem:psipsi}
The homomorphism of algebraic groups
\[ \eta \times \eta : G \longrightarrow 
\Aut^0_V \times \Aut^0_V = \Aut^0_Y, \quad
g \longmapsto ((v,w) \mapsto ((g,e) \cdot v, (g,e) \cdot w)))\]
yields an identification 
$G = \Aut_{Y,Z}^0$.
\end{lemma}

\begin{proof}
We may assume that $k$ is algebraically closed.  
Since the kernel of $\eta$ is trivial,
the same holds for  $\eta \times \eta$, so the latter is a closed immersion.
Let $f \in F$. Then $\Gamma_f$ is stable
by the action of $G$ on $Y$ via $\psi\times\psi$,
since $G \times e$ centralizes $e \times f$. 
Thus, the $G$-action on $Y$  stabilizes $Z$, and $\psi\times\psi$ factors over 
$\Aut_{Y,Z}^0$.

It remains to check that the resulting closed immersion 
$G\ra\Aut_{Y,Z}^0$ is an isomorphism.
The  group scheme $ \Aut_{Y,Z}^0$ stabilizes the 
irreducible component $\Gamma_f\subset Z$, $f\in F$ in view of 
Lemma \ref{lem:irr}. In other words, $\Aut_{Y,Z}^0$ stabilizes 
the $\Gamma_f\subset Y$.
By Lemma \ref{lem:diag}, it is contained in the intersection of the 
$\Delta(\Aut_V^{0,e\times f})$. By construction, we have 
$G=\Aut_V^{0,e \times F}=\bigcap_{f\in F} \Aut_V^{0,e\times f}$, 
and the assertion follows. 
\end{proof}

\noindent
{\bf Step 3.}
Let $X$ be the normalization of the blow-up $\Bl_Z(Y)$  
with respect to the center $Z\subset Y$, and write
$f:X\ra Y$ for the resulting  birational morphism.
Then $X$ is again normal, projective,
and geometrically integral. Moreover,
the $G$-action on $Y$ via $\eta \times \eta$
lifts uniquely to an action on $X$ via
a homomorphism
\[ f^* : G \longrightarrow \Aut^0_X. \]

\begin{lemma}\label{lem:f}
If $n \geq 2$ then the above map
$f^*$ is an isomorphism.
\end{lemma}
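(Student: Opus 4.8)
The plan is to apply the isomorphism criterion of Lemma \ref{lem:hom} to $f^*$. Since $f$ is equivariant we have $f_*\circ f^*=\psi\times\psi$, and $\psi\times\psi$ has trivial kernel by Lemma \ref{lem:psipsi}; hence $f^*$ has trivial kernel and is a closed immersion, so in particular it is injective on $k^{\alg}$-points and on Lie algebras. As $G$ is smooth, Lemma \ref{lem:hom} then reduces the statement to proving that $f^*$ is also \emph{surjective} on $k^{\alg}$-points and on Lie algebras, and by descent I may assume $k=k^{\alg}$. Throughout I use that $f_*\O_X=\O_Y$, since $f$ is proper and birational onto the normal scheme $Y$, so that Blanchard's Lemma produces the descent homomorphism $f_*:\Aut^0_X\to\Aut^0_Y$.

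For surjectivity on points I would start with $\phi\in\Aut^0_X(k)$ and push it down to $\bar\phi=f_*(\phi)\in\Aut^0_Y(k)$. Because $f$ is an isomorphism over $Y\smallsetminus Z$ and the hypothesis $n\geq2$ forces $\codim_Y Z=2n-n=n\geq2$, the non-isomorphism locus $E\subset X$ satisfies $f(E)=Z$ set-theoretically. This locus is intrinsic to $f$, so the equivariance $\bar\phi\circ f=f\circ\phi$ gives $\phi(E)=E$ and hence $\bar\phi(Z)=Z$ as sets. Since $Z$ is reduced, $\bar\phi$ then stabilizes $Z$ scheme-theoretically, i.e. $\bar\phi\in\Aut^0_{Y,Z}(k)=(\psi\times\psi)(G(k))$, say $\bar\phi=(\psi\times\psi)(g)$. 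Finally $\phi$ and $f^*(g)$ have the same image under $f_*$; as $f$ is an isomorphism over the dense open $Y\smallsetminus Z$ the kernel of $f_*$ is trivial on $k$-points, so $\phi=f^*(g)$.

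Surjectivity on Lie algebras is where $n\geq2$ is decisive, and I expect it to be the main obstacle. Restriction to the dense open $U=f^{-1}(Y\smallsetminus Z)\cong Y\smallsetminus Z$ gives an injection $\Der(\O_X)=H^0(X,\Theta_X)\hookrightarrow H^0(Y\smallsetminus Z,\Theta_Y)$; since $Y$ is normal the sheaf $\Theta_Y=\uHom(\Omega^1_Y,\O_Y)$ is reflexive, and as $Z$ has codimension $\geq2$ every such section extends uniquely across $Z$, realizing $\Lie(f_*)$ as an injection $\Der(\O_X)\hookrightarrow\Der(\O_Y)$. The crux is that its image lands in $\Der_Z(\O_Y)=\Lie(\Aut^0_{Y,Z})$, i.e. that every vector field on $Y$ lifting to $X$ is tangent to $Z$. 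Tangency $\bar D(\shI_Z)\subseteq\shI_Z$ may be checked at the generic points of the components of $Z$, because $Z$ is reduced; by construction these lie in the dense open $Z_0\subset Z$, along which both $Z$ and $Y$ are smooth since $Z_0\subset Y_0\subset G\times G$. There the (normalized) blow-up is just the classical blow-up of a smooth center of codimension $n\geq2$, and a direct local computation shows that a field lifts to a regular field on the exceptional divisor only if it is tangent to the center. Thus $\Der(\O_X)\hookrightarrow\Der_Z(\O_Y)$ gives $\dim_k\Der(\O_X)\leq\dim_k\Lie(G)=n$, which together with the injection $\Lie(f^*):\Lie(G)\hookrightarrow\Der(\O_X)$ forces $\Lie(f^*)$ to be bijective.

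Combining the two surjectivity statements with the injectivity from the first paragraph, Lemma \ref{lem:hom} yields that $f^*$ is an isomorphism. The role of $n\geq2$ is precisely to guarantee $\codim_Y Z\geq2$, which is needed both to extend vector fields across $Z$ by reflexivity and to force the lifting/tangency dichotomy at generic points of $Z$; when $n=1$ the subscheme $Z$ becomes a divisor and the blow-up is an isomorphism, which is exactly why that case requires separate treatment.
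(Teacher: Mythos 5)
Your proposal is correct, and its skeleton --- reduce via Lemma \ref{lem:hom} to surjectivity on $k^{\alg}$-points and on Lie algebras, and handle points by noting that any automorphism of $X$ descends to an automorphism of $Y$ stabilizing the reduced non-isomorphism locus $Z$ --- is the same as the paper's. Where you genuinely diverge is the Lie algebra step. The paper stays upstairs on $X_0$: from the sequence $0 \to \catO_{X_0} \to \catO_{X_0}(E_0) \to \catN_{E_0} \to 0$, together with $f_{0,*}\catO_{X_0}(E_0)=\catO_{Y_0}$ (this is where $n\geq 2$ enters for them) and $R^1f_{0,*}\catO_{X_0}=0$, it deduces $H^0(E_0,\catN_{E_0})=0$, so every global vector field on $X$ is automatically tangent to $E_0$ and hence descends to a derivation preserving $\catI_Z$, i.e.\ lands in $\Lie(\Aut^0_{Y,Z})=\Lie(G)$. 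You instead descend the vector field first, restricting to $Y\smallsetminus Z$ and extending across the codimension-$\geq 2$ locus by normality, and then argue that a field on $Y_0$ whose lift to the blow-up of the smooth center $Z_0$ is regular must already preserve $\catI_{Z_0}$; reducing tangency to the generic points of $Z$ is legitimate since the obstruction $\catI_Z\to\catO_Z$, $t\mapsto \bar D(t)\bmod \catI_Z$, is $\catO_Y$-linear and $\catO_Z$ has no embedded points. Your route is more elementary (no $R^1f_*$ vanishing, no rigidity of the exceptional divisor), at the cost of the one chart computation you assert rather than perform; that computation (for $D=\sum a_i\,\partial/\partial x_i$ the lift is regular iff $a_1,\dots,a_c\in\catI_{Z_0}$, visibly requiring codimension $c\geq 2$) is standard and does go through, so I regard this as a legitimate alternative rather than a gap. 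The paper's cohomological argument buys a coordinate-free, reusable statement ($H^0(E_0,\catN_{E_0})=0$), while yours makes the role of $n\geq 2$ completely explicit at the level of local equations.
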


\begin{proof}
We begin with some observations. We have
$\catO_Y \stackrel{\sim}{\longrightarrow}
f_*(\catO_X)$ by the normality of $Y$ (Lemma 
\ref{lem:nor}) and Zariski's Main Theorem. 
Thus, Blanchard's Lemma 
yields a homomorphism of algebraic groups 
\[ f_* : \Aut^0_X \longrightarrow \Aut^0_Y. \]
Since $f$ is birational, $f^*$ and $f_*$
have trivial kernels, and hence are closed
immersions. Moreover, the composition
$f_* \circ f^* : G \to \Aut^0_Y$
equals $\eta \times \eta$, since this holds
over a schematically dense open set of $Y$. 
Using Lemma \ref{lem:psipsi}, 
we may thus identify $G$ with $\Aut^0_{Y,Z}$, 
and $f_* \circ f^*$ with $\id$.
Also, note that the pull-back morphism
\[ f_0 : X_0 = f^{-1}(Y_0) \longrightarrow Y_0 \] 
is the blow-up of the smooth variety $Y_0$
along the smooth subvariety $Z_0$ of pure
codimension $n$. In particular, $X_0$
and the exceptional divisor $E_0$ of $f_0$
are smooth. Thus, the schematic closure 
$E \subset X$ of $E_0$ is geometrically reduced. 
Moreover, the schematic image of $E$ in $Y$ is $Z$, 
since $f_0(E_0) = Z_0$ is schematically dense in $Z$ 
(but $E$ may be strictly contained in the exceptional 
locus of $f$).

We now show that the closed immersion $f^*$ 
is an isomorphism, by checking that it is 
surjective on $k^{\alg}$-points and on Lie
algebras (Lemma \ref{lem:hom}). We may thus
assume $k$ algebraically closed. Then 
$\Aut^0_X(k)$ stabilizes the exceptional locus 
of $f$, and hence its irreducible components 
(Lemma \ref{lem:irr}). Thus, $\Aut^0_X(k)$
stabilizes $E$, and hence $Z$. This shows 
that $f^*$ is surjective on $k$-points.

Next, we show that
$\Lie(f^*) : \Lie(G) \to \Lie(\Aut^0_X)$
is surjective. For this, we use a rigidity
property of the exceptional divisor $E_0$.
We have an exact sequence of coherent sheaves
on $X_0$:
\[ 0 \longrightarrow \catO_{X_0}
\longrightarrow \catO_{X_0}(E_0)
\longrightarrow \catO_{E_0}(E_0)
\longrightarrow 0, \]
and hence an exact sequence on $Y_0$
\[ 0 \longrightarrow f_{0,*}(\catO_{X_0})
\longrightarrow f_{0,*}(\catO_{X_0}(E_0))
\longrightarrow f_{0,*}(\catO_{E_0}(E_0))
\longrightarrow R^1f_{0,*}(\catO_{X_0}). 
\]
Moreover, 
$f_{0,*}(\catO_{X_0}) = \catO_{Y_0}$
by Zariski's Main Theorem again; 
$f_{0,*}(\catO_{X_0}(E_0)) = \catO_{Y_0}$
as $\codim_{Y_0} f_0(E_0) = n \geq 2$;
and $R^1f_{0,*}(\catO_{X_0}) = 0$
by \cite{SGA 6}, Expos\'e VII, Lemme 3.5. 
Thus, we have
$f_{0,*}(\catO_{E_0}(E_0)) = 0$. In particular,
$H^0(E_0, \catO_{E_0}(E_0)) = 0$.

Now let $D \in \Lie(\Aut^0_X) = \Der(\catO_X)$. 
Then $D$ induces
$D_0 \in \Der(\catO_{X_0}) = H^0(X_0,\Theta_{X_0})$.
Let $\catN_{E_0} = \catO_{E_0}(E_0)$, then 
the image of $D_0$ under the composition
$\Theta_{X_0} \to \Theta_{X_0}\vert_{E_0}
\to \catN_{E_0}$ is $0$.
As 
$H^0(E_0,\catN_{E_0}) = \Hom_{X_0}
(\catI_{E_0},\catO_{X_0}/\catI_{E_0})$
where $\catI_{E_0} = \catO_{E_0}(-E_0)$,
this means that
$D_0 \in \Der(\catO_{X_0}; \catI_{E_0})$
(the derivations of $\catO_{X_0}$ which preserve
the sheaf of ideals of $E_0$).
In view of Lemma \ref{lem:image}, it follows that
$D \in \Der(\catO_X; \catI_E)$.
So the image of $D$ in $\Der(\catO_Y)$ lies
in $\Der(\catO_Y;\catI_Z)$ (as follows e.g. from
Lemma \ref{lem:image} again). Since 
$\Der(\catO_Y;\catI_Z) = \Lie(G)$ by 
Lemma \ref{lem:psipsi}, this completes the proof.
\end{proof}

It remains to treat the case that $n = 1$.
Then $Y$ is a surface, and $Z$ a curve.
Choose an elliptic curve $E$ with origin $0$.
Then $\Aut^0_{E,0}$ is trivial, and hence we obtain
$\Aut^0_{Y,Z} \stackrel{\sim}{\to}
\Aut^0_{Y \times E,Z \times 0}$ by Blanchard's
Lemma. In view of Lemma \ref{lem:psipsi},
we thus have 
$G \stackrel{\sim}{\to} 
\Aut^0_{Y \times E,Z \times 0}$,
where $Y \times E$ is a threefold, and 
$Z \times 0$ a curve. We now consider the
normalized blow-up $X$ of $Y \times E$ along 
$Z \times 0$, and argue as in the proof of 
Lemma \ref{lem:f} to get
$G \simeq \Aut^0_X$.

\begin{remark}
\label{rem:dim}
Let $G$ be a smooth connected algebraic group of
dimension $1$. If the ground field $k$ is perfect, 
then $G$ is either an elliptic curve, or 
the additive group $\GG_a$, or a $k$-form of 
the multiplicative group $\GG_m$. In each of these 
cases, one may easily construct a smooth projective 
surface $X$ such that $G \simeq \Aut^0_X$ 
(see the end of Section 2.1 in \cite{Brion 2014} 
for details). As a consequence, Theorem 
\ref{thm:main} holds with $\dim(X) = 2 n$.

If $k$ is imperfect, then we get in addition
the non-trivial $k$-forms of $\GG_a$; 
these are described in \cite{Russell 1970}. 
Given such a $k$-form $G$, 
is there a normal projective surface $X$
such that $G \simeq \Aut^0_X$? If the answer
is affirmative, then Theorem \ref{thm:main}
holds again with $\dim(X) = 2n$. We have 
just seen that there is a normal projective 
threefold $X$ with $G \simeq \Aut^0_X$, but 
one may check that there is no normal projective 
curve satisfying this assertion.
\end{remark}

\section{Fitting ideals for K\"ahler differentials}
\label{sec:fitting}

One crucial idea to prove Theorem \ref{thm:main} for non-smooth  algebraic groups  is to control actions via
Fitting ideals for K\"ahler differentials.  In this section we collect some more or less obvious facts, which
turn out to be very useful. 
Let us start by recalling the theory of Fitting ideals (compare \cite{Fitting 1936} and \cite{Eisenbud 1995}, Section 20.2): Suppose $M$ is a module of finite presentation
over some commutative ring $R$, and choose a particular presentation
$$
R^{\oplus n}\stackrel{P}{\lra} R^{\oplus m}\lra M\lra 0.
$$
Let  $\ideala_i\subset R$ be the ideal generated by the $(m-i)$-minors of the matrix $P\in\Mat_{m\times n}(R)$, for any integer $i\leq m$.
Then $\ideala_m$ is the unit ideal,  $\ideala_{m-1}$ is generated by the matrix entries, and we have $\ideala_i\supset\ideala_{i-1}$ according to  Laplace expansion
for determinants.  It turns out that these ideals depend only on the module rather than the presentation,
and one calls them \emph{Fitting ideals} $\Fitt_i(M)=\ideala_i$. 
Note that $\Fitt_m(M)=R$, and our definition   extends to all integers   by setting $\Fitt_i(M)=R$ for $i>m$.
One immediately sees that $\Fitt_{i+1}(M\oplus R ) =\Fitt_i(M)$.
Also note that the \emph{annihilator ideal} $\idealb=\Ann_R(M)$ is related to   Fitting ideals by the formulas
$\idealb \cdot \Fitt_{i+1}(M) \subset \Fitt_i(M)$ and 
$\idealb^m \subset\Fitt_0(M) \subset \idealb$.

Obviously Fitting ideals commute with arbitrary base change, 
in particular with localizations. This leads to the following generalization:
Let $\shF$ be a quasicoherent sheaf of finite presentation on some scheme $X$.
Then the Fitting ideals for $M=\Gamma(U,\shF)$ for the various affine open sets $U=\Spec(R)$ are compatible,  
thus define a quasicoherent \emph{sheaf of Fitting ideals} $\Fitt_i(\shF)\subset\O_X$.
Let us write $X_i\subset X$ for the corresponding closed subscheme. 
By the Nakayama Lemma, the  complementary open set $U_i\subset X$ is the locus of all points
$a$ having an open neighborhood $U$ such that 
$\shF|U$ can be written as a  quotient of $\O_U^{\oplus i}$.
The crucial point here is that the Fitting ideal endows the closed set 
$X_i\subset X$ with an intrinsic scheme structure, compatible with base change along any $X' \ra X$.
 The inclusions of ideals $\Fitt_i(\shF)\supset\Fitt_{i-1}(\shF)$ correspond to 
inclusions of subschemes $X_i\subset X_{i-1}$.

Now let $k$ be a ground field, and $X$ a $k$-scheme 
of finite type. Then the sheaf of K\"ahler differentials 
$\Omega^1_X = \Omega^1_{X/k}$ 
is coherent, and hence quasicoherent and of finite presentation.
Consider the Fitting ideals $\Fitt_i(\Omega^1_X)$ and the resulting 
closed subschemes $X_i\subset X$.

\begin{proposition}
\label{prop:stable}
Assume that $X$ is a $G$-scheme for some group scheme $G$. 
Then the closed subschemes $X_i\subset X$ are $G$-stable.
\end{proposition}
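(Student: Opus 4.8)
The goal is to show that the closed subschemes $X_i\subset X$ determined by the Fitting ideals $\Fitt_i(\Omega^1_X)$ are $G$-stable. The plan is to reduce the statement to the fact—already emphasized in the discussion preceding the proposition—that the formation of $\Fitt_i(\Omega^1_X)$ is intrinsic to $X$ and compatible with base change. Concretely, I would unwind the definition of $G$-stability for a closed subscheme: by the discussion in Section \ref{sec:prel}, a closed subscheme $Z\subset X$ is $G$-stable precisely when the action morphism $a:G\times X\to X$ carries $Z$ into $Z$, equivalently when $a^{-1}(Z)\supset G\times Z$ as closed subschemes, or even more symmetrically when $a^*(\shI_Z)$ and $\pr_X^*(\shI_Z)$ cut out the same closed subscheme of $G\times X$, where $\pr_X:G\times X\to X$ is the second projection. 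So the task becomes comparing the pullbacks of the Fitting ideal sheaf along the two maps $a$ and $\pr_X$.

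The key step is the observation that $a$ and $\pr_X$ are \emph{isomorphic} as $X$-schemes, by the standard ``shearing'' automorphism of $G\times X$ sending $(g,x)\mapsto (g, g\cdot x)$ with inverse $(g,x)\mapsto (g, g^{-1}\cdot x)$; under this automorphism $a$ becomes $\pr_X$. Equivalently, both $a$ and $\pr_X$ are smooth (indeed $a$ is an isomorphism after the shear, and $\pr_X$ is a base change of $G\to\Spec k$, which one may at least assume flat; smoothness of $a$ follows since it is the composite of the shearing isomorphism with $\pr_X$). Since Fitting ideals commute with arbitrary base change, I would compare $a^*\bigl(\Fitt_i(\Omega^1_X)\bigr)$ and $\pr_X^*\bigl(\Fitt_i(\Omega^1_X)\bigr)$ with the intrinsic Fitting ideal $\Fitt_i(\Omega^1_{G\times X})$ computed on the product. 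The point is that both morphisms realize $G\times X$ as an $X$-scheme for which $\Omega^1$ differs from $\pr_X^*\Omega^1_X$ only by the relative part $\Omega^1_{G}$ pulled back, a locally free sheaf; and adding a free summand shifts the Fitting index in a controlled way via $\Fitt_{i+1}(M\oplus R)=\Fitt_i(M)$.

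The cleanest route avoids index bookkeeping: I would argue that $a\colon G\times X\to X$ is smooth of some relative dimension $d=\dim G$, so there is an exact sequence $0\to a^*\Omega^1_X\to\Omega^1_{G\times X}\to\Omega^1_{G\times X/X}\to 0$ with $\Omega^1_{G\times X/X}$ locally free of rank $d$ and the sequence locally split; the same holds for $\pr_X$. Pulling $\Fitt_i(\Omega^1_X)$ back along $a$ therefore gives exactly $\Fitt_{i+d}(\Omega^1_{G\times X})$, and pulling back along $\pr_X$ gives the same sheaf $\Fitt_{i+d}(\Omega^1_{G\times X})$. Hence $a^*\Fitt_i(\Omega^1_X)=\pr_X^*\Fitt_i(\Omega^1_X)$ as ideal sheaves on $G\times X$, which is precisely the assertion that $a^{-1}(X_i)=\pr_X^{-1}(X_i)=G\times X_i$, i.e.\ that $X_i$ is $G$-stable. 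To phrase the conclusion in the functor-of-points language of the earlier lemmas, I would note that the equality of ideal sheaves says exactly that $g\cdot x$ lies in $X_i$ whenever $x$ does, uniformly in $T$-points.

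The main obstacle I anticipate is the careful justification that the smooth (or shearing-isomorphism) structure really identifies the two pullbacks of the Fitting sheaf, i.e.\ handling the locally free relative differentials and the resulting shift of Fitting index. One must be slightly careful that the compatibility $\Fitt_{i+d}(a^*\Omega^1_X\oplus\text{(free of rank }d))=a^*\Fitt_i(\Omega^1_X)$ holds Zariski-locally where the conormal sequence splits, and then globalizes because Fitting ideals are defined Zariski-locally and glue. Since smoothness of $a$ comes for free from the shearing isomorphism with $\pr_X$, and Fitting ideals commute with the arbitrary base change exhibited by these two projections, this is the crux; everything else is formal manipulation of the two descriptions of $G\times X$ as an $X$-scheme.
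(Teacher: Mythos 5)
Your overall strategy---showing that $a^{-1}(X_i)$ and $\pr_X^{-1}(X_i)$ coincide by identifying both pullbacks of $\Fitt_i(\Omega^1_X)$ with a single intrinsic Fitting ideal of $\Omega^1_{G\times X}$---is sound, but your justification leans on a claim that fails in exactly the situation this proposition is designed for. You assert that $\pr_X$ is smooth as a base change of $G\to\Spec(k)$ and deduce smoothness of $a$ via the shearing automorphism; but $G\to\Spec(k)$ is smooth only when $G$ itself is smooth, and the entire purpose of Section \ref{sec:fitting} is to handle non-smooth $G$ (Section \ref{sec:ns} applies the proposition to orbit closures of non-reduced groups). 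Relatedly, $\Omega^1_{G\times X/X}=\pr_G^*\Omega^1_G$ has rank $\dim_k\Lie(G)$ rather than $\dim G$, and its local freeness is not automatic for a non-smooth scheme. What rescues your argument is a fact you never invoke: for \emph{any} group scheme over a field, $\Omega^1_{G/k}\cong\O_G\otimes_k e^*\Omega^1_{G/k}$ is free, by translation invariance. Granting this, the product decomposition $\Omega^1_{G\times X}\cong\pr_X^*\Omega^1_X\oplus\pr_G^*\Omega^1_G$, transported by the shear $\sigma$ (which satisfies $\pr_G\circ\sigma=\pr_G$ and $\pr_X\circ\sigma=a$), yields $\Omega^1_{G\times X}\cong a^*\Omega^1_X\oplus\pr_G^*\Omega^1_G$ as well, so both pullback ideals equal $\Fitt_{i+l}(\Omega^1_{G\times X})$ with $l=\dim_k\Lie(G)$, and the conclusion follows. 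As written, however, the smoothness step is false and the freeness of the complement is unjustified.

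The paper avoids this issue entirely by working with \emph{relative} differentials: a $T$-point $g\in G(T)$ is a $T$-automorphism of $X\times T$, which tautologically induces an isomorphism $g^*(\Omega^1_{X\times T/T})\cong\Omega^1_{X\times T/T}$ and hence preserves $\Fitt_i(\Omega^1_{X\times T/T})$; the latter is the ideal of $X_i\times T$ by base-change compatibility. Your argument is essentially the universal case $T=G$, $g=\id_G$ (whose action on $X\times G$ is your shear $\sigma$), but phrased with absolute differentials on $G\times X$, which forces you to split off and control the $\Omega^1_G$-summand. Either add the translation-invariance freeness of $\Omega^1_G$ explicitly, or pass to relative differentials over the parameter scheme, which renders the smoothness question moot and also covers group schemes not of finite type.
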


\proof
We have to check that for any scheme $T$, each $T$-valued point 
$g\in G(T)$ 
stabilizes the base change $X_i \times T$.
Since K\"ahler differentials and Fitting ideals commute with 
base change, it suffices to show that
$X_i$ is stable with respect to any $T$-automorphism 
$g:X \times T \ra X \times T$.
Such an automorphism induces a bijection 
$\varphi:g^*(\Omega^1_{X \times T/T})\ra \Omega^1_{X \times T/T}$,
and thus 
$\Fitt_i(g^*\Omega^1_{X \times T/T}) =  \Fitt_i(\Omega^1_{X \times T/T})$.
On the other hand, we have 
$g^{-1}\Fitt_i(\Omega^1_{X \times T/T}) \cdot \O_{X \times T} 
= \Fitt_i(g^*\Omega^1_{X \times T/T})$,
again because Fitting ideals commute with base change. 
So we see that $g :X \times T \ra X \times T$ 
sends the ideal corresponding to $X_i \times T\subset X \times T$ 
to itself, and thus stabilizes $X_i \times T$.
\qed

\medskip

Let $X$ and $Y$ be schemes that are separated and of finite type, 
and $f:X\ra Y$ be a proper morphism with $f_*(\O_X)=\O_Y$,
and suppose that a connected algebraic group $G$  acts on $X$. 
According to Blanchard's Lemma, there is a unique $G$-action on $Y$ 
making the morphism $f$ equivariant.
Let  $X_i\subset X$ be the closed subscheme defined by 
$\Fitt_i(\Omega^1_X)$,
and denote its schematic image by $Z_i \subset Y$. 

\begin{corollary}
\label{cor:stable}
In the above setting, the closed subschemes 
$Z_i \subset Y$ are $G$-stable.
\end{corollary}

\proof
This follows by combining Lemma \ref{lem:image} and 
Proposition \ref{prop:stable}.
\qed

\section{Computations with Rees rings}
\label{sec:rees}

Let $k$ be a field, and $R=k[x_1,\ldots,x_n]$ be the polynomial
ring in $n\geq 1$ indeterminates $x_i$. 
Fix some integers $1\leq s\leq n$ and  $v_s,\ldots,v_n \geq 1$,
and consider the ideal
$$
\ideala=(x_s^{v_s},\ldots,x_n^{v_n}).
$$
We then form the \emph{Rees ring} $R[\ideala T]$, 
which is the $R$-subalgebra of the polynomial ring $R[T]$
generated by the homogeneous elements $x_s^{v_s}T,\ldots,x_n^{v_n}T$.
We now seek to understand, from a purely algebraic view,   a certain Fitting ideal  
for the K\"ahler differentials 
$\Omega^1_{R[\ideala T]} = \Omega^1_{R[\ideala T]/k}$.  
This bears geometric consequences,
because the homogeneous spectrum of the Rees ring is the blow-up of $\AA^n=\Spec(R)$
with respect to the center $Z=\Spec(R/\ideala)$.

The Rees ring $R[\ideala T]$ is generated as a graded $k$-algebra 
by the homogeneous elements
$$
x_1 ,\ldots,x_n \quadand x_s^{v_s}T,\ldots,x_n^{v_n}T.
$$
Between these generators we  have the obvious relations
$x_i^{v_i}\cdot x_j^{v_j}T = x_j^{v_j}\cdot x_i^{v_i}T$ 
for all $s\leq i< j\leq n$, and these actually generate the ideal of all relations.
Indeed, the canonical morphism $\Sym^\bullet(\ideala)\ra R[\ideala T]$ is bijective,
and the obvious relations generate the ideal of all relations for the symmetric algebra
(see \cite{Micali 1964}, Theorem 1 and Lemma 2).
In turn,    $\Omega^1_{R[\ideala T]}$ is generated as module over the Rees ring by   $2n-s+1$ differentials 
$dx_1, \ldots, dx_n, d(x_s^{v_s}T),\ldots,d(x_n^{v_n}T)$.
These are only subject  to the $\binom{n-s+1}{2}$ relations $d(x_i^{v_i}\cdot x_j^{v_j}T) = d(x_j^{v_j}\cdot x_i^{v_i}T)$.

We now assume that the ground field $k$ has characteristic $p>0$, and  that there is an integer $s \leq l<n$
such that   $p$ divides the the  exponents $v_s,\ldots,v_l$
whereas  $v_{l+1}=\ldots=v_n=1$. This ensures
$$
d(x_i^{v_i})= v_ix_i^{v_i-1}dx_i= \begin{cases}
0 & \text{if $i\leq l$;}\\
dx_i	& \text{else.}
\end{cases}
$$
The  following result computes one Fitting ideal for the K\"ahler differentials outside the locus
defined by the irrelevant ideal $R[\ideala T]_+=(x_s^{v_s}T,\ldots,x_n^{v_n}T)$:

\begin{theorem}
\label{thm:fitting}
Assumptions as above. Then  the two ideals 
$$
\Fitt_{n +l-s+1}(\Omega^1_{R[\ideala T]}) \quadand (x_s^{v_s},\ldots,x_n^{v_n},x_s^{v_s}T,\ldots,x_l^{v_l}T)
$$
in the Rees ring $R[\ideala T]$ induce  the same ideals  in the localization $R[\ideala T]_g$,
for any $g=x_r^{v_r}T$,   $s\leq r\leq n$.
\end{theorem}

\proof
We first reduce the problem to the case $s=1$.  Consider the subrings  $R_0=k[x_1,\ldots,x_{s-1}]$ and $R'=k[x_s,\ldots,x_n]$
inside the Rees ring,  and the ideal  $\ideala'=(x_s^{v_s},\ldots,x_n^{v_n})$ inside $R'$. We then have a tensor product decomposition
$$
R[\ideala T]= k[x_1,\ldots, x_{s-1},x_s,\ldots,x_n, x_s^{\nu_s}T,\ldots,x_n^{\nu_n}T] = R_0\otimes_k R'[\ideala' T]
$$
of the Rees ring, and thus a direct sum decomposition
$$
\Omega^1_{R[\ideala T]} = \left(\Omega^1_{R_0}\otimes_{R_0} R[\ideala T]\right)  \oplus
\left( \Omega^1_{R'[\ideala' T]/k}\otimes_{R'[\ideala' T]} R[\ideala T]\right)
$$
for the K\"ahler differentials (see \cite{Eisenbud 1995}, Proposition 16.5). 
Using that the  summand on the left is free of rank $s-1$, we infer
\[ \Fitt_{n+l-s+1}(\Omega^1_{R[\ideala T]}) = 
\Fitt_{(n-s+1)+(l-s+1)}(\Omega^1_{R'[\ideala' T]})\otimes_{R'[\ideala' T]} R[\ideala T]. \]
This reduces our problem to the case $s=1$.

In this situation, we have   $2n$ generators, which  come in three kinds, namely 
$$
dx_1 ,\ldots,dx_l\quadand  dx_{l+1} ,\ldots,dx_n\quadand d(x_1^{v_1}T),\ldots,d(x_n^{v_n}T).
$$
Likewise, we group the relations into three types, using the product rule and our assumptions on the exponents:  First we have  the $\binom{l}{2}$
relations  
\begin{equation}
\label{minors I}
x_i^{v_i}\cdot d(x_j^{v_j}T) - x_j^{v_j}\cdot d(x_i^{v_i}T),\quad 1\leq i<j\leq l.
\end{equation}
Then there are the  $l(n-l)$ relations
\begin{equation}
\label{minors II}
x_i^{v_i}\cdot d(x_jT) - x_j\cdot d(x_i^{v_i}T) - x_i^{v_i}T\cdot d(x_j),\quad \text{$1\leq i\leq l$ and $ l+1\leq j\leq n$}.  
\end{equation}
Finally,  the remaining $\binom{n-l}{2}$ relations take the form
\begin{equation}
\label{minors III}
x_i\cdot d(x_jT)+ x_jT\cdot dx_i - x_j\cdot d(x_iT) - x_iT\cdot dx_j,\quad l+1\leq i<j\leq n.
\end{equation}
Recall that we assume $p\mid v_i$ for $s\leq i\leq l$, and $v_{l+1}=\ldots=v_n=1$, which indeed enters in the above formulas.
Using these generators and relations, we obtain a presentation of $\Omega^1_{R[\ideala T]}$ by a matrix $P$
of size $2n\times\binom{n}{2}$. Since we have three kinds of generators and three types of relations, one can view
$P$ as a block matrix with $3\cdot 3=9$ blocks,  roughly indicated as follows:
\newcommand{\+}{\phantom{+}}
\begin{equation}
\label{presentation matrix}
P=
\left(
\begin{array}{c|c|c}
&&\\
\cline{1-3}
& 
\begin{matrix}
		\\
		\\
		\\
& -x_i^{v_i}T& \\	
		\\
\end{matrix}  
&
\begin{matrix}
  		\\
& \+x_jT&\\
		\\
& -x_iT& 	\\	
		\\
\end{matrix} 
			\\ 
			\cline{1-3} 
\begin{matrix}
\\
& -x_j^{v_j}\\
& \+x_i^{v_i}\\
		\\
		\\
\end{matrix} 		
& 
\begin{matrix}
\\
& -x_j 	\\
		\\
& \+x_i^{v_i}\\
		\\
\end{matrix} 
& 
\begin{matrix}
\\
		\\
		\\
& -x_j \\
& \+x_i \\
\end{matrix} 
			\\
\end{array}
\right)
\end{equation}
Note that the columns of $P$ are indexed by the 2-element subsets $\{i,j\}\subset\{1,\ldots,n\}$, 
which we regard as $1\leq i<j\leq n$, equipped with the lexicographic order.
Also note that each column  contains either two, three or four non-zero entries, 
and that all three upper blocks are zero. 
 
We have to gain  control  on  the minors of size $2n-(n+l)=n-l$, after localization of $g=x_r^{v_r}T$
with   fixed  $1\leq r\leq n$. The case $l=n-1$ can be dealt with immediately:
Then the minors in question are just the matrix entries,
and the assertion becomes obvious.  Note that in this case, the three blocks to the right are empty.
From now on, we assume that $l\leq n-2$, such that the blocks on the right are non-empty.
 
Suppose    $r\leq l$. After renumbering, we may assume $r=1$. Now look  at the central block:
The $n-l$ rows  indexed by $\{i,j\}$   with  $i=1$ and $l+1\leq  j\leq  n$ yield a scalar submatrix of the form
$x_1^{v_1}T\cdot E_{n-l}$, whose  scalar $ x_1^{v_1}T=g$ is  invertible  in the localization $R[\ideala T]_g$. Here $E_{n-l}$ denotes
the identity matrix of size $(n-l)\times(n-l)$.
Consequently,   the     Fitting ideal becomes the unit ideal upon localization, and the same obviously holds for the ideal
$\idealb=(x_1^{v_1},\ldots,x_n^{v_n},x_1^{v_1}T,\ldots,x_l^{v_l}T)$.

We now suppose $r\geq l+1$. Our  task is to verify the equality
$$
\Fitt_{n+l}(\Omega^1_{R[\ideala T]})_g = \idealb_g 
$$
of ideals in $R[\ideala T]_g$. After renumbering, 
we may assume $r=l+1$,  so that $g=x_{l+1} T$.  
We first check that every generator  of $\idealb$ is contained in the localized Fitting ideal.
The trick   is to  examine the  right middle block:
Removing its first row and keeping the  $n-l-1$ columns    indexed by $\{i,j\}$   with $i=l+1$ and $l+2\leq j\leq n$
yields the  scalar submatrix $-g E_{n-l-1}$.
Now fix some $i\leq l$ and  look at the central block: Its column corresponding to $\{i,l+1\}$  
has as single entry $-x_i^{v_i}T$. We infer that $P$ contains   an upper triangular submatrix   of the form
$$
\begin{pmatrix}
-x_i^{v_i}T 	& \ast\\
0		& -g E_{n-l-1}
\end{pmatrix} 
\in\Mat_{n-d}(R[\ideala T])
$$
and conclude that $x_i^{v_i}T$ is contained in the localized Fitting ideal, for all $1\leq i\leq l$. 
But then also  $x_i^{v_i}=x_{l+1}/g\cdot x_i^{v_i}T$ belongs to it. For $j\geq l+1$,
we use the lower middle block to produce a  submatrix of the form
$$
\begin{pmatrix}
\ast 		& -g E_{n-l-1}\\
-x_j 	& 0
\end{pmatrix} 
\in\Mat_{n-d}(R[\ideala T]).
$$
Now Laplace expansion reveals that $x_j$ belongs to the localized Fitting ideal.

It remains to  verify that the localized Fitting ideal is contained in $\idealb_g$, which is the most difficult part of the argument. 
Note that all but the entries from the right middle block $B$ already belong to $\idealb$. We seek to remove the critical entries
from the matrix by using elementary row and column operations.
For this we have to inspect $B$ in more detail. In light of the lexicographic order on the indices $\{i,j\}$,
it takes  the form
$$
B=
\left(
\begin{array}{ccc|ccc}
x_{l+2}T 	& \cdots	& x_nT 	& 0	& \cdots	& 0\\
\cline{1-6}
-g		&		&		&	&		&\\
		& \ddots	&		&	&	\ast	&\\
		&		& -g		&	&		&		
\end{array}
\right),
$$
with $n-l$ rows and $\binom{n-l}{2}$ columns. Note that the columns to the left have indices $\{i,j\}$ with $i=l+1$
and $l+2\leq j\leq n$.
To proceed, we perform elementary row operations over the localization $R[\ideala T]_g$ 
to remove the entries $x_{l+2}T,\ldots,x_nT$ from the top row.
The crucial observation is that this process does not afflict  the remaining zeros in the top row:
For any index $\{j,j'\}$ with  $l+2\leq j<j'\leq n$ the corresponding column in $B$ contains two non-zero entries, namely $x_{j'}T$ and $-x_jT$.
The combined row operations add  
$$
\frac{x_jT}{-g} \cdot (x_{j'}T) + \frac{x_{j'}T}{-g} \cdot (-x_jT)=0
$$
into  the $\{j,j'\}$-position of the top row. Summing up,  the top row becomes trivial.
Performing additionally  elementary column operations, we make the lower right block zero as well.

Now examine the effect of the above elementary row and column operations on  the whole matrix $P$.
The result is a matrix $P'$ of the form
$$
P'=
\left(
\begin{array}{c|c|c}
0	& 0	& 0\\
\cline{1-3}
\ast	& \ast	& B'\\
\cline{1-3}
\ast	& \ast	& \ast
\end{array}
\right),
$$
where $B'=(\begin{smallmatrix}0&0\\-gE_{n-l-1}& 0 \end{smallmatrix})$, and 
all entries from the $\ast$-blocks   belong to the ideal $\idealb_g$.  
Performing further elementary row and column operations, we may assume that in $P'$ also the neighbors 
to the left and below of the  subblock $-gE_{n-l-1}$ become zero. Clearly, the entries in the $\ast$-blocks
still belong to the ideal $\idealb_g$.
As explained in \cite{Eisenbud 1995}, page 493 our localized ideal
$\Fitt_{n+l}(\Omega^1_{R[\ideala T]})_g$, which
is generated by the $(n-l)$-minors of the matrix $P'$,
is also generated by the entries of the submatrix $Q'$ obtained by removing the rows and columns   passing through the subblock $-gE_{n-l-1}$.
It follows that the localized Fitting ideal must belong to $\idealb_g$.
\qed

\medskip
We now translate the above result into more geometric form: Set $Y=\AA^n=\Spec(R)$ and $Z=\Spec(\bar{R})$, where $\bar{R}=R/\ideala$.
Form  the blow-up $X=\Bl_Z(Y)$  and let $f:X\ra Y$ be the ensuing morphism.
The exceptional divisor $E=f^{-1}(Z)$ is the homogeneous spectrum of 
$$
R[\ideala T]\otimes_R\bar{R}=\bar{R}[x_s^{v_s}T,\ldots,x_n^{v_n}T].
$$
This gives an identification $E=\PP^{n-s}_{\bar{R}}$, and we may regard the generators $x_i^{v_i}T$ as global sections
for the invertible sheaf $\O_X(1)=\O_X(-E)$.

\begin{corollary}
\label{blowing up affine space}
In the above setting, the closed subscheme $X'\subset X$ corresponding to the Fitting ideal 
$\Fitt_{n+l-s}(\Omega^1_X)$
is the linear subscheme  inside the exceptional divisor $E=\PP^{n-s}_{\bar{R}}$  
given by the equations $x_i^{v_i}T=0$ for $s\leq i\leq l$. If furthermore $n\geq l+1$, 
then the schematic image of $X'$
coincides with the center $Z\subset Y$.
\end{corollary}

\proof
Write $S=R[\ideala T]$ for the Rees ring, and fix  one of the homogeneous generators $g=x_i^{v_i}T$ of degree one.
Consider  the localization $S_g$ and its degree-zero part $S_{(g)}$.
The latter defines the basic open set   $D_+(g) = \Spec(S_{(g)})$ 
inside the blow-up $X=\Proj(S)$.
According to \cite{EGA II}, paragraph (2.2.1) the canonical map
$S_{(g)}\otimes_k k[T^{\pm 1}]= S_{(g)}[T^{\pm}]\ra S_g$ given by the assignment $T\mapsto g/1$
is bijective. In turn, we get a  decomposition
$$
\Omega^1_{S_g} = (\Omega^1_{S_{(g)}} \otimes_{S_{(g)}} S_g) \oplus (\Omega^1_{k[T^{\pm 1}]} \otimes_{k[T^{\pm1}]} S_g),
$$
by \cite{Eisenbud 1995}, Proposition 16.5.
The summand on the right is free of rank one, and we conclude that the $j$-th Fitting ideal 
for $\Omega^1_{S_g}$ inside $S_g$ is induced from the $(j-1)$-th Fitting ideal
for $\Omega^1_{S_{(g)}}$ inside the homogeneous localization $S_{(g)}$.

We now apply Theorem \ref{thm:fitting}
with $j=n+l-s+1$, and see that $X'$ is contained in the exceptional divisor $E=\PP^{n-s}_{\bar{R}}$.
In fact, it is a linear subscheme  of codimension $l-s+1$. We thus have $X'=\PP^d_{\bar{R}}$ with
$d=(n-s)-(l-s+1) = n-l-1$. The schematic image is given by the spectrum of $H^0(X',\O_{X'})$.
Under the additional condition $n\geq l+1$ we have $d\geq 0$, and this ring of global sections is indeed $\bar{R}$. 
\qed

\section{The non-smooth case}
\label{sec:ns}

This section contains the proof of Theorem \ref{thm:main} 
in the case that $G$ is non-smooth. 
Then our ground field $k$ has characteristic $p>0$.  
The proof requires some preparation and appears at the end of the section.

Let us first consider the following situation: Suppose that $G$ acts freely 
on some geometrically integral scheme of finite type $V$.  
Then there exists a dense open $G$-stable set $V_0 \subset V$ such that 
the quotient $V_0/G$ exists as a scheme,  $V_0/G$ is of finite type,  
and the projection $V_0 \to V_0/G$ is a $G$-torsor (see 
\cite{SGA 3a}, Expos\'e V,  Th\'eor\`eme 8.1).  
We may also assume that $V_0$ is smooth by using Lemma \ref{lem:smooth}; 
then $V_0/G$ is smooth as well.  

Replacing $V$ with $V_0$,  we may therefore assume that 
$V$ is smooth and the total space of some  $G$-torsor $V \to V/G$, 
where $V/G$ is a smooth scheme of finite type.
We thus find a finite \'etale subscheme $F \subset V/G$.  
The cartesian square 
$$
\begin{CD}
W	@>>> 	F\\
@VVV		@VVV\\
V	@>>>	V/G
\end{CD}
$$
defines a $G$-stable closed subscheme $W \subset V$. 
We now consider the blow-up $U = \Bl_W(V)$, 
and write $f:U\ra V$ for the ensuing morphism.
Define integers by the equations
\begin{equation}
\label{some integers}
n=\dim(V)\quadand s-1=\dim(G)\quadand  l  =\dim_k(\lieg),
\end{equation}
where $\lieg=\Lie(G)$.

\begin{proposition}
\label{image equals center}
In the above setting, let $U'\subset U$ be the closed subscheme defined
by the sheaf of ideals $\Fitt_{n+ l-s}(\Omega^1_U)$, and $V' \subset V$
the schematic image of $U'$. If $n\geq l+1$ then $V'$ coincides with 
the center $W \subset V$.
\end{proposition}

\proof
To verify this equality of closed subschemes, 
it suffices to treat the case that $k$ is algebraically closed.
Then $F$ is a finite sum of copies of $\Spec(k)$.
By passing to suitable open sets, it suffices to treat the case that $F = \Spec(k)$.
Fix a closed point $a\in V$. Our task is to check that  
the two closed subschemes $V',W\subset V$ give the same ideal 
in the formal completion $\O_{V,a}^\wedge$.
To proceed, note that    $s=\dim(G)+1\leq \dim_k(\lieg) =l$ because $G$ is non-smooth. Thus $n +l-s\geq n$.
Since  $\Omega^1_V$ is locally free of rank $n$ we see that 
$\Fitt_{n +l-s}(\Omega^1_V) = \catO_V$.
It follows that $V' \subset W$, at least as closed sets, 
and this already settles the case $a\notin W$.

Suppose now that $a\in W$. Then the orbit map $G \to U$, 
$g \mapsto g \cdot a$ gives an identification $G=W$. 
Set $R=\O_{V,a}^\wedge$ and $A=\O_{W,a}^\wedge=\O_{G,e}^\wedge$. 
According to \cite{Demazure; Gabriel 1970}, Chapter III,  Corollary 6.4,
we have $A=k[[u_1,\ldots,u_l]]/(u_s^{v_s},\ldots, u_l^{v_l})$ 
for some $1\leq s\leq l$ and some $p$-powers $v_s,\ldots,v_l\neq 1$.
Moreover, $R$ is isomorphic to a formal power series ring 
in $n$ indeterminates over the field $k$.
By Lemma \ref{complete intersection} below, there is a regular
system of parameters 
$x_1,\ldots,x_n\in R$
such  that the kernel of the surjection $R\ra A$ is generated by $x_s^{v_s},\ldots, x_l^{v_l},x_{l+1},\ldots,x_n$.
Setting $v_{l+1}=\ldots=v_n=1$ we get 
$$
\O_{V,a}^\wedge=k[[x_1,\ldots,x_n]]\quadand \O_{W,a}^\wedge=k[[x_1,\ldots,x_n]]/(x_s^{v_s},\ldots, x_n^{v_n}).
$$
This is exactly  the situation in Corollary \ref{blowing up affine space}, 
after base change from the polynomial ring $k[x_1,\ldots,x_n]$ to the formal power series ring. 
Since the formations of blow-ups and schematic images commute with flat
base change, and formation of Fitting ideals commutes with arbitrary base change, 
the corollary ensures that the center and schematic image coincide in 
$\Spec(\O_{V,a}^\wedge)$. 
\qed

\medskip
In the preceding proof we have used some facts on formal power series rings:
Suppose $A$ is a local noetherian ring of the form $A=k[[u_1,\ldots,u_l]]/(f_s,\ldots,f_l)$,
where $u_1,\ldots, u_l$ are indeterminates  and  $f_s,\ldots,f_l$ are formal power series 
that form a regular sequence contained in   $\maxid_{R}^2$.
Then   $\dim(A)=l-(l-s+1) = s-1$, whereas the \emph{embedding dimension} equals 
$\edim(A)=\dim_k(\maxid_A/\maxid_A^2)=l$.
Now let $\varphi:R\ra A$ be any  surjection from a  formal power series ring $R$   
in $n\geq l$ indeterminates over the field $k$. Then   
$\ideala=\Kernel(\varphi)$ is generated by a regular sequence contained in $\maxid_R$,
according to \cite{EGA IV.4}, Proposition 19.3.2. 
The following observation gives additional information:

\begin{lemma}
\label{complete intersection}
In the above situation, there exists a regular 
system of parameters $x_1,\ldots,x_n\in R$ 
such that $\ideala$ is generated by
$f_i(x_1,\ldots,x_l)$ with $s\leq i\leq l$, together with the $x_j$ with $l+1\leq j\leq n$.
\end{lemma}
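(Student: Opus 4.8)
The plan is to reduce the lemma to a concrete statement about completing a regular sequence to a regular system of parameters in the formal power series ring $R=k[[t_1,\ldots,t_n]]$. I am given a surjection $\varphi:R\ra A$ with $A=k[[u_1,\ldots,u_l]]/(f_s,\ldots,f_l)$, where $\dim(A)=s-1$ and $\edim(A)=l$, and I must produce a regular system of parameters $x_1,\ldots,x_n\in R$ such that $\ideala=\Kernel(\varphi)$ is generated by $f_s(x_1,\ldots,x_l),\ldots,f_l(x_1,\ldots,x_l)$ together with $x_{l+1},\ldots,x_n$.

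**Choice of the last coordinates.** First I would exploit that $\edim(A)=l$, i.e. $\dim_k(\maxid_A/\maxid_A^2)=l$. The surjection $\varphi$ induces a surjection $\maxid_R/\maxid_R^2\twoheadrightarrow\maxid_A/\maxid_A^2$ of $k$-vector spaces, of dimensions $n$ and $l$ respectively; hence its kernel $K$ has dimension $n-l$. I would choose $x_{l+1},\ldots,x_n\in\maxid_R$ mapping to a basis of $K$; these lie in $\ideala+\maxid_R^2$, and after adjusting by elements of $\ideala$ (which I may absorb into the generators) I may assume $x_{l+1},\ldots,x_n\in\ideala$. Simultaneously I would choose $x_1,\ldots,x_l\in\maxid_R$ whose images in $\maxid_R/\maxid_R^2$ complete the $x_{l+1},\ldots,x_n$ to a basis; then $x_1,\ldots,x_n$ is a regular system of parameters for $R$, and $\varphi(x_1),\ldots,\varphi(x_l)$ generate $\maxid_A/\maxid_A^2$.

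**Matching the coordinates with $u_1,\ldots,u_l$.** Next I would arrange that $x_1,\ldots,x_l$ play the role of $u_1,\ldots,u_l$. Since $\varphi(x_1),\ldots,\varphi(x_l)$ is a minimal generating set of $\maxid_A$, and the classes $\bar u_1,\ldots,\bar u_l$ form another such set, there is an automorphism of $A$ carrying $\varphi(x_i)$ to $\bar u_i$; by lifting a formal-coordinate change I may assume, after replacing the presentation of $A$ by an isomorphic one, that $\varphi(x_i)=\bar u_i$ for $1\leq i\leq l$. Here I use the standard fact that a continuous $k$-algebra endomorphism of a power series ring sending a regular system of parameters to a regular system of parameters is an automorphism, together with the universal property that $\varphi$ factors through $k[[x_1,\ldots,x_l]]\ra A$. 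Under this identification the relations defining $A$ become exactly $f_s(x_1,\ldots,x_l),\ldots,f_l(x_1,\ldots,x_l)$.

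**Identifying the kernel.** It remains to show that $\ideala$ is generated precisely by $x_{l+1},\ldots,x_n$ together with $g_i:=f_i(x_1,\ldots,x_l)$, $s\leq i\leq l$. These $n-l+(l-s+1)=n-s+1$ elements certainly lie in $\ideala$: the $x_j$ by construction, and the $g_i$ because $\varphi(g_i)=f_i(\bar u_1,\ldots,\bar u_l)=0$ in $A$. Let $\idealb$ denote the ideal they generate. Then $R/\idealb\cong k[[x_1,\ldots,x_l]]/(g_s,\ldots,g_l)$, which is isomorphic to $A=R/\ideala$ via the map induced by $\varphi$; since both are quotients of $R$ and $\idealb\subset\ideala$ with $R/\idealb\ra R/\ideala$ an isomorphism, I conclude $\idealb=\ideala$. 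The main obstacle is the coordinate-matching step: one must verify that the power series $f_s,\ldots,f_l$ are genuinely preserved (not merely up to higher-order terms) under the change of variables, which relies on the precise factorization $\varphi:R\twoheadrightarrow k[[x_1,\ldots,x_l]]\twoheadrightarrow A$ and the freedom to replace the given presentation of $A$ by an isomorphic one; the regularity of the sequence $f_s,\ldots,f_l$ (giving the correct dimension count $\dim(A)=s-1$) guarantees no degeneracy occurs in this identification.
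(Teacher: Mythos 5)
Your overall architecture matches the paper's: the choice of $x_{l+1},\ldots,x_n$ mapping to a basis of the kernel of $\maxid_R/\maxid_R^2\ra\maxid_A/\maxid_A^2$ and adjusted modulo $\maxid_R^2$ so as to lie in $\ideala$, and the final identification $\idealb=\ideala$ via the bijection $R/\idealb\ra A$, are exactly what the paper does. The gap is in your coordinate-matching step. It is simply not true that any two minimal generating sets of $\maxid_A$ are related by an automorphism of $A$: for $A=k[[u_1,u_2]]/(u_1^2)$ there is no automorphism carrying $(\bar u_2,\bar u_1)$ to $(\bar u_1,\bar u_2)$, since that would force $\bar u_2^2=0$. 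Your fallback --- replacing the presentation of $A$ by an isomorphic one so that $\varphi(x_i)=\bar u_i$ --- does fix the images, but only at the cost of replacing $f_s,\ldots,f_l$ by their transforms under the coordinate change; you would then prove that $\ideala$ is generated by $\tilde f_i(x_1,\ldots,x_l)$ and the $x_j$ for some \emph{other} power series $\tilde f_i$. You flag this yourself as ``the main obstacle,'' but you do not resolve it, and the weaker statement is useless for the intended application, where the $f_i=u_i^{v_i}$ are the specific monomials coming from $\O_{G,e}^\wedge$ and must be preserved on the nose.

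The repair is immediate, and it is the paper's route: do not choose $x_1,\ldots,x_l$ arbitrarily and correct afterwards, but use surjectivity of $\varphi$ to pick $x_1,\ldots,x_l\in\maxid_R$ with $\varphi(x_i)=\bar u_i$ from the outset. Since $\bar u_1,\ldots,\bar u_l$ is a basis of $\maxid_A/\maxid_A^2$, these $x_i$ stay linearly independent in $\maxid_R/\maxid_R^2$ modulo the image of $\ideala$, so together with your $x_{l+1},\ldots,x_n$ they form a regular system of parameters; the rest of your argument (in particular $\varphi\bigl(f_i(x_1,\ldots,x_l)\bigr)=f_i(\bar u_1,\ldots,\bar u_l)=0$ and the bijectivity of $R/\idealb\ra A$, which sends generators to generators and relations to relations) then goes through verbatim, with no automorphism of $A$ or change of presentation needed.
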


\proof
Consider the short exact sequence of vector spaces
$$
0\lra \ideala/(\ideala\cap \maxid_R^2)\lra \maxid_R/\maxid_R^2\lra \maxid_A/\maxid_A^2\lra 0.
$$
Write $\bar{u}_i\in A$ for the classes of the indeterminates $u_i$.
Choose $x_1,\ldots,x_l\in \maxid_R$ mapping to   $\bar{u}_1,\ldots,\bar{u}_l\in\maxid_A$.
 The latter form a basis of the cotangent space $\maxid_A/\maxid_A^2$,
so the former stay linearly independent in $\maxid_R/\maxid_R^2$.
Choose $x_{l+1},\ldots,x_n\in R$ that belong to $\ideala$ 
and that yield a basis of $\ideala/(\ideala\cap \maxid_R^2)$.
Then $x_1,\ldots,x_n$ form a basis in $\maxid_R/\maxid_R^2$, hence constitute 
a regular system of parameters in $R$.
Set $g_i=f_i(x_1,\ldots,x_l)$ for $s\leq i\leq l$, and  form the ideal $\idealb=(g_s,\ldots,g_l,x_{l+1},\ldots,x_n)$.
Clearly $\idealb\subset \ideala$, so the surjection $\varphi:R\ra A$ induces a map
$$
k[[x_1,\ldots,x_r]]/(g_s,\ldots,g_l)=R/\idealb\lra A= k[[u_1,\ldots,u_l]]/(f_s,\ldots,f_l)
$$
This  sends generators to generators, and relations to relations, whence is bijective.
Hence the kernel $\ideala/\idealb$ vanishes, and thus  $\idealb=\ideala$.
\qed

\medskip

\emph{Proof of Theorem \ref{thm:main} for non-smooth $G$.}
According to Proposition \ref{prop:subgroup}, 
the algebraic group $G$ embeds into a smooth connected algebraic 
group $H$. By Section \ref{sec:smooth}, 
there is a projective scheme $Y$ with $\Aut^0_Y=H$,
and we may furthermore assume that $Y$ is normal and geometrically integral, 
and that there is a dense
open set $V\subset Y$ that is $H$-stable, with free $H$-action.  
We may also assume that $\dim(Y)= \max(2\dim(H),3)$.
Choose $W \subset V$ as described before Proposition \ref{image equals center},   
write $Z\subset Y$ for its schematic closure,
let $X=\Bl_Z(Y)$ be the blow-up, and   
$f:X\ra Y$ be the resulting birational morphism. 
Since $Y$ is normal, 
the map $\O_Y\ra f_*(\O_X)$ is bijective, according to Zariski's Main Theorem.

By Blanchard's Lemma, there is a unique action of $\Aut_X^0$ on $Y$ 
making the morphism $f:X\ra Y$ equivariant. 
As $f$ is birational, 
we thus get an inclusion $\Aut_X^0 \subset \Aut_Y^0 = H$.
On the other hand, since the $G$-action on $V$ stabilizes $W$, 
the $G$-action on $Y$ stabilizes the schematic closure $Z$
(Lemma \ref{lem:image}). So this action lifts uniquely
to an action on $X$ (see \cite{Martin 2022}, Proposition 2.7). 
This yields an inclusion $G \subset \Aut^0_X$, 
and our task is to show that this is an equality.
For this we may assume that $k$ is algebraically closed.

As in \eqref{some integers}, we define integers  
$n=\dim(X)$ and $s-1=\dim(G)$ 
and $l=\dim(\lieg)$, where $\lieg=\Lie(G)$.
Consider the open set $U=f^{-1}(V)$, and let $U'\subset U$ 
be the closed subscheme corresponding 
to the sheaf of Fitting ideals $\Fitt_{n+ l-s}(\Omega^1_U)$. 
We have $n=\dim(X) \geq  2\dim(H)\geq 2l \geq l+1$ by our assumptions.
According to Proposition \ref{image equals center}, 
the schematic image of $U'$ in $V$ coincides with $W$.
Since $\Aut^0_X$ stabilizes the open set $U=f^{-1}(V)$, it must also stabilize $U'$.
In turn, its action on $V$ stabilizes $W$. 
Fix a closed point $a\in W$, and set $G'=\Aut_X^0$.
Recall that $H=\Aut_Y^0$ acts freely on $V$. 
So the orbit maps give inclusions of schemes 
$G\subset G'=G'\cdot a\subset G\cdot a=G$, thus $G=G'$.
\qed

\begin{remark}
\label{rem:nonnormal}
In general, the scheme $X$ constructed in the above proof 
is non-normal. Consider indeed the finite group scheme
$G = \alpha_p \times \alpha_{p^2}$. Then
$G$ is a subgroup scheme of the smooth connected algebraic 
group $H = \GG_a \times \GG_a$. So the above construction 
yields a $G$-scheme $X$ which contains a $G$-stable dense 
open set, isomorphic to the blow-up of 
a dense open set of $H \times H = \AA^4$ along the 
$G$-orbit of a $k$-rational point. To show that $X$ is
non-normal, it suffices to check that the blow-up of
$\AA^4$ along the $G$-orbit of the origin is non-normal
along the exceptional divisor. We then have
$R = k[x_1,x_2,x_3,x_4]$, 
$\ideala = (x_3^p, x_4^{p^2})$ and
$S = R[\ideala T] 
= k[x_1,x_2,x_3,x_4, x_3^p T, x_4^{p^2} T]$.
Let $g = x_3^p T$, then the degree-$0$ localization
$S_{(g)}$ satisfies
\[ S_{(g)} = k[x_1,x_2,x_3,x_4,x_4^{p^2}/x_3^p]. \]
The normalization of $S_{(g)}$ in its fraction field contains 
$x_4^p/x_3$, which is not in $S_{(g)}$. So $S_{(g)}$
is indeed non-normal along the exceptional divisor
(the zero subscheme of $x_3$).
\end{remark}

\section{Conflicts of interest}

The authors have no conflicts of interest to declare that are relevant to this article.


\end{document}